\newtheorem{thm}{Theorem}
\newtheorem{lem}{Lemma}
\newtheorem{claim}{Claim}
\theoremstyle{definition}
\renewcommand\proofname{\it Proof}
\begin{document}

	\title{\bf Spectral conditions for graphs having all (fractional) $[a,b]$-factors}
	\author{{Jiaxin Zheng, Junjie Wang, and  Xueyi Huang\footnote{Corresponding author.}\setcounter{footnote}{-1}\footnote{\emph{E-mail address:} huangxymath@163.com}}\\[2mm]
		\small School of Mathematics, East China University of Science and Technology,\\
		\small Shanghai 200237, China}

	\date{}
	\maketitle
	{\flushleft\large\bf Abstract }  Let $a\leq b$ be two positive integers. We say that a graph $G$ has all $[a,b]$-factors if it has an $h$-factor for every function $h: V(G)\rightarrow \mathbb{Z}^+$ such that $a\le h(v) \le b$ for all $v\in V(G)$ and $\sum_{v\in V(G)}h(v)\equiv 0\pmod 2$, and  has all fractional $[a,b]$-factors if it has a fractional $p$-factor for every $p: V(G) \rightarrow \mathbb{Z}^+$ such that $a\le p(v)\le b$ for all $v\in V(G)$. In this paper, we provide tight spectral radius conditions for graphs having all $[a,b]$-factors ($3\leq a<b$) and all fractional $[a,b]$-factors ($1\leq a<b$), respectively. 
	
	\begin{flushleft}
		\textbf{Keywords:} Spectral radius; $[a,b]$-factor; fractional $[a,b]$-factor.
	\end{flushleft}

	\section{Introduction}
	All graphs considered in this paper are undirected and simple. Let $G$ be a graph with vertex set $V(G)$ and  edge set $E(G)$, and let $e(G)=|E(G)|$ denote the number of edges in $G$.  For any $v\in V(G)$, we denote by $d_G(v)$ be the degree of $v$ in $G$, $N_G(v)$ the set of vertices adjacent to $v$ in $G$, and $E_G(v)$ the set of edges incident with $v$ in $G$.  For any $S\subseteq V(G)$, we denote $E_G(S)=E(G[S])$ and $e_G(S)=|E_G(S)|$, where $G[S]$ is the subgraph of $G$ induced by $S$. Also, we denote by $E_G(S,T)$ the set of edges between two disjoint subsets $S$ and $T$ of $V(G)$, and write $e_G(S,T)=|E_G(S,T)|$.  The \textit{join} of two graphs $G_1$ and $G_2$, denoted by $G_1\nabla G_2$,  is the graph obtained from $G_1\cup G_2$ by adding all possible edges between $G_1$ and $G_2$.

%The \textit{adjacency matrix} of $G$ is defined as $A(G)=(a_{u,v})_{u,v\in V(G)}$, where $a_{u,v}=1$ if $u$ and $v$ are adjacent in $G$, and $a_{u,v}=0$ otherwise. The largest eigenvalue of  $A(G)$ is called the  \textit{spectral radius} of $G$, and denoted by $\rho(G)$. For some basic results on the spectral radius of graphs, we refer the reader to \cite{CDS,S}, and references therein. 

%Let us relate a non-negative integer $f(x)$ to each vertex $x$ of the graph. If a subgraph has a degree of $f(x)$ at each vertex $x$, it is called an \textit{$f$-factor}. So a natural question arises: What condition does a graph satisfy that contains an $f$-factor? 

In the past seventy years, the theory of graph factors played a key role in the study of graph theory \cite{AK11,C,KT,Lov,LZ,N,T,TT}.  Let $g$ and $f$ be two integer-valued functions defined on $V(G)$ such that $0 \le g(v) \le f(v)$ for all $v \in V (G)$. A \textit{$(g,f)$-factor} of $G$ is a spanning subgraph $F$ of $G$ satisfying $g(v) \le d_{F}(v) \le f(v)$ for all $v\in V(G)$. In particular, an $(f,f)$-factor is called an \textit{$f$-factor}. 
Let $a\leq b$ be two positive integers. If $g\equiv a$ and $f\equiv b$, then a $(g,f)$-factor is also called an $[a,b]$-\textit{factor}. In 1952, Tutte \cite{T} established the famous $f$-Factor Theorem, which provides a necessary and sufficient condition for the existence of an $f$-factor in a graph. In 1970, Lov\'{a}sz \cite{Lov} generalized  the conclusion of the $f$-Factor Theorem to $(g,f)$-factors.

\begin{thm}(Lov\'{a}sz\cite{Lov}\label{thm::1})
	A graph $G$ has a $(g, f)$-factor if and only if
$$
		f(D)-g(S)+\sum_{x\in S}d_{G-D}(x)-\hat{q}_{G}(D,S,g,f) \ge 0
$$
	for all disjoint sets $D, S \subseteq V$, where $\hat{q}_{G}(D,S,g,f)$ denotes the number of components $C$ of $G-(D \cup S)$ with $g(v)=f(v)$ for all $v \in V(C)$ and $e_{G}(V(C), S)+f(V(C)) \equiv 1 \pmod 2$.
\end{thm}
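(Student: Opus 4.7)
The plan is to prove the theorem by combining an edge-counting argument for necessity with a reduction to Tutte's classical $f$-factor theorem for sufficiency.

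For necessity, assume $F$ is a $(g,f)$-factor of $G$ and fix disjoint $D, S \subseteq V(G)$. The starting point is the trivial inequality
$$
\sum_{v \in D}\bigl(f(v) - d_F(v)\bigr) + \sum_{v \in S}\bigl(d_F(v) - g(v)\bigr) \ge 0,
$$
which, after expanding and rewriting the degree sums in terms of $e_F(S, D)$, $e_F(S, V(G)\setminus(D \cup S))$, and the analogous quantities for $G$, becomes
$$
f(D) - g(S) + \sum_{x \in S} d_{G-D}(x) \ge e_F\bigl(S,\, V(G) \setminus (D \cup S)\bigr).
$$
For each component $C$ counted by $\hat{q}_G(D,S,g,f)$, the constraint $g \equiv f$ on $V(C)$ combined with the parity condition $e_G(V(C), S) + f(V(C)) \equiv 1 \pmod{2}$ forces any $(g,f)$-factor to contain at least one edge between $V(C)$ and $S$, because otherwise $\sum_{v \in V(C)} d_F(v) = f(V(C))$ would have the wrong parity relative to $e_F(V(C), D)$. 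Summing these contributions over all such components $C$ produces the $-\hat{q}_G(D,S,g,f)$ term and yields the desired inequality.

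For sufficiency, I would reduce to Tutte's $f$-factor theorem through an auxiliary graph $H$ in which each vertex $v \in V(G)$ is equipped with a flexibility gadget (for example, attaching to $v$ a bipartite graph $B_v$ whose parameters are tuned to $f(v) - g(v)$) together with an appropriately chosen function $f_H$ on $V(H)$. The design should ensure that the restriction of any $f_H$-factor of $H$ to $E(G)$ is a $(g,f)$-factor of $G$, and conversely. If $G$ has no $(g,f)$-factor, then $H$ has no $f_H$-factor, and Tutte's $f$-factor theorem supplies disjoint $D^\star, S^\star \subseteq V(H)$ witnessing this. Setting $D = D^\star \cap V(G)$ and $S = S^\star \cap V(G)$, an analysis of how the gadget vertices contribute — after passing, if needed, to an optimal choice of $D^\star, S^\star$ — should translate the Tutte violation into a violation of the Lov\'asz inequality for $G$.

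The main obstacle will be verifying that the reduction correctly matches odd components in the Tutte sense with components $C$ of $G - (D \cup S)$ satisfying both $g \equiv f$ on $V(C)$ and the parity condition $e_G(V(C), S) + f(V(C)) \equiv 1 \pmod{2}$. The bookkeeping around the gadget vertices — ensuring that they make no unwanted contribution to either the component count or the degree sums — is the most delicate step, and pinning down the exact gadget together with the argument for why $D^\star, S^\star$ can be assumed to interact with each $B_v$ in a controlled way is the technical heart of the proof.
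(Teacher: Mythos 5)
First, a point of reference: the paper does not prove Theorem \ref{thm::1} at all --- it is quoted as a classical result of Lov\'asz and used as a black box --- so there is no in-paper argument to compare yours against, and your proposal has to stand on its own.

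Your necessity direction is essentially the right argument, but the key claim is stated incorrectly. For a component $C$ counted by $\hat{q}_{G}(D,S,g,f)$ one has $d_F(v)=f(v)$ for all $v\in V(C)$, hence $f(V(C))\equiv e_F(V(C),D)+e_F(V(C),S)\pmod 2$; combined with $e_G(V(C),S)+f(V(C))\equiv 1\pmod 2$ this gives $e_F(V(C),D)+\bigl(e_G(V(C),S)-e_F(V(C),S)\bigr)\equiv 1\pmod 2$, hence $\ge 1$. So what $C$ is forced to supply is \emph{either} an $F$-edge into $D$ \emph{or} a $G$-edge into $S$ that $F$ omits --- not necessarily ``at least one edge between $V(C)$ and $S$'' as you assert. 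Both kinds of contribution can be absorbed into the slack of the inequality (the first via a lower bound on $\sum_{v\in D}d_F(v)$, the second via a lower bound on $\sum_{x\in S}\bigl(d_{G-D}(x)-d_F(x)\bigr)$), so the count still closes, but you need to redo this bookkeeping with the correct dichotomy; as written the parenthetical justification does not establish what you claim.

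The genuine gap is sufficiency, which is the hard direction and carries essentially all of the content of the theorem. You never specify the gadget $B_v$, the function $f_H$, or how an $f_H$-factor of $H$ restricts to a $(g,f)$-factor of $G$ and conversely; and you explicitly defer the translation of a Tutte-type deficiency pair $(D^\star,S^\star)$ in $H$ into a Lov\'asz-type violation in $G$, calling it ``the technical heart of the proof.'' That heart is exactly what is missing: matching the odd components of $H-(D^\star\cup S^\star)$ in Tutte's sense to the components $C$ of $G-(D\cup S)$ satisfying both $g\equiv f$ on $V(C)$ and the parity condition is delicate precisely because components containing a vertex with $g(v)<f(v)$ must be shown \emph{never} to count, and the gadget vertices must be shown to contribute nothing to either side. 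Without a concrete gadget and that verification, the proposal is a plan rather than a proof.
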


Based on Theorem \ref{thm::1}, a series of sufficient conditions have been obtained for the existence of a $(g,f)$-factor (or  particularly, $[a,b]$-factor) in  graphs \cite{A,EK96,FLL22,LM98,L,LL,O2,O3,O4,WZ,X}. We say that a graph $G$ has \textit{all $(g,f)$-factors} if it has an $h$-factor for every function $h: V(G)\rightarrow \mathbb{Z}^+$ such that $g(v)\le h(v) \le f(v)$ for all $v\in V(G)$ and $\sum_{v\in V(G)}h(v)\equiv 0\pmod 2$. In \cite{N}, Niessen provided a characterization for graphs having all $(g,f)$-factors.

\begin{thm}(Niessen \cite{N}\label{thm::2})
	$G$ has all $(g, f)$-factors if and only if
$$
		g(D)-f(S)+\sum_{x\in S}d_{G-D}(x)-q_{G}^{*}(D,S,g,f) \ge \left\{
		\begin{array}{ll}
			-1,&\mbox{if $f \neq g$}\\
			0,&\mbox{if f = g}
		\end{array}
		\right.
$$
	for all disjoint sets $D, S \subseteq V$, where $q_{G}^{*}(D,S,g,f)$ denotes the number of components $C$ of $G-(D \cup S)$ such that there exists a vertex $v \in V(C)$ with $g(v) < f(v)$ or $e_{G}(V(C), S)+f(V(C)) \equiv 1$ (mod $2$). 
\end{thm}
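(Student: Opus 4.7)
The plan is to derive Niessen's characterisation from Lov\'asz's $(g,f)$-factor theorem (Theorem~\ref{thm::1}) applied to each admissible $h$, viewed as the $(h,h)$-factor problem. For a valid $h$ with $g\le h\le f$ and $\sum_v h(v)$ even, Lov\'asz's condition for the existence of an $h$-factor reads
\[
h(D)-h(S)+\sum_{x\in S}d_{G-D}(x)-\hat{q}_G(D,S,h,h)\ge 0
\]
for all disjoint $D,S\subseteq V(G)$. I would first record the parity identity
\[
h(D)-h(S)+\sum_{x\in S}d_{G-D}(x)-\hat{q}_G(D,S,h,h)\equiv h(V(G))\equiv 0\pmod 2,
\]
obtained by expanding $\sum_{x\in S}d_{G-D}(x)=2e_G(S)+e_G(S,V\setminus(D\cup S))$ and noting that $\hat{q}_G(D,S,h,h)$ has the same parity as $\sum_{C}\bigl(e_G(V(C),S)+h(V(C))\bigr)$, where $C$ ranges over components of $G-(D\cup S)$. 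Hence any bound of the form $\text{LHS}\ge -1$ on that quantity upgrades automatically to $\text{LHS}\ge 0$.

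For sufficiency, assume the Niessen inequality and fix any admissible $h$. For disjoint $D,S$ I would compare term by term: clearly $h(D)\ge g(D)$ and $h(S)\le f(S)$, while the monotonicity $\hat{q}_G(D,S,h,h)\le q_G^*(D,S,g,f)$ holds because any component $C$ contributing to $\hat{q}_G$ either contains a vertex with $g(v)<f(v)$, hence lies in $q_G^*$ by definition, or has $g\equiv f$ on $V(C)$, in which case $h(V(C))=f(V(C))$ and the parity condition for $q_G^*$ coincides. These comparisons yield
\[
h(D)-h(S)+\sum_{x\in S}d_{G-D}(x)-\hat{q}_G(D,S,h,h)\ge g(D)-f(S)+\sum_{x\in S}d_{G-D}(x)-q_G^*(D,S,g,f),
\]
which is $\ge 0$ if $g\equiv f$ and $\ge -1$ otherwise; the parity identity then forces the left-hand side to be non-negative, and Lov\'asz's theorem delivers an $h$-factor.

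For necessity, given disjoint $D,S$ I would engineer an admissible $h$ that realises the Niessen bound via Lov\'asz. Take $h=g$ on $D$ and $h=f$ on $S$; on each component $C$ of $G-(D\cup S)$ with $g\equiv f$ the values are forced; on any other component pick some $v_C\in V(C)$ with $g(v_C)<f(v_C)$ and tune $h(v_C)$ within $[g(v_C),f(v_C)]$ so that $e_G(V(C),S)+h(V(C))\equiv 1\pmod 2$. This choice maximises $\hat{q}_G(D,S,h,h)$ at the value $q_G^*(D,S,g,f)$. If the resulting $\sum_v h(v)$ is even, Lov\'asz applied to this $h$ yields the Niessen inequality with slack $0$, covering the case $g\equiv f$; if the sum is odd, the hypothesis $g\not\equiv f$ guarantees a vertex admitting a $\pm 1$ adjustment, which costs at most one unit of $\hat{q}$ and produces the inequality with slack $-1$.

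The step I expect to be the main obstacle is the parity bookkeeping in the necessity direction: one must verify that the single-unit adjustment needed to restore $\sum_v h(v)\equiv 0\pmod 2$ can always be absorbed with a loss of at most one in $\hat{q}$, by arguing that adjustments inside $D\cup S$ cost nothing while adjustments inside a component $C$ flip exactly one indicator. Confirming that such an adjustable vertex always exists when $g\not\equiv f$, and treating the degenerate case where $g\equiv f$ globally and the parity of $\sum_v g(v)$ makes the theorem vacuous, are the delicate points; with these settled, the inequality chains above combined with the parity identity close both directions.
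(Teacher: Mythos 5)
The paper does not prove this statement at all --- it is quoted verbatim from Niessen's article \cite{N} --- so there is no internal proof to compare against; I can only assess your derivation on its own terms. Your route (deducing Niessen's criterion from Lov\'asz's $(g,f)$-factor theorem applied to each admissible $h$) is the natural one and, as far as I can check, it works. The three load-bearing steps are all correct: the parity identity $h(D)-h(S)+\sum_{x\in S}d_{G-D}(x)-\hat{q}_G(D,S,h,h)\equiv h(V(G))\pmod 2$ (which follows from $\sum_{x\in S}d_{G-D}(x)=2e_G(S)+\sum_C e_G(V(C),S)$ and $\hat{q}_G\equiv\sum_C(e_G(V(C),S)+h(V(C)))$); the monotonicity $\hat{q}_G(D,S,h,h)\le q_G^*(D,S,g,f)$; and, for necessity, the extremal choice of $h$ realising $\hat{q}_G(D,S,h,h)=q_G^*(D,S,g,f)$ together with the observation that a single $\pm1$ repair of the global parity costs exactly one unit --- either by raising $h$ on $D$, lowering it on $S$, or knocking one component out of $\hat{q}_G$ --- which is precisely where the $-1$ in the $f\neq g$ case comes from.

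The one genuine loose end is the degenerate case you flag: if $g\equiv f$ and $f(V(G))$ is odd, there is no admissible $h$, so ``$G$ has all $(g,f)$-factors'' holds vacuously, yet taking $D=S=\emptyset$ shows the stated inequality fails (some component has odd $f$-sum, so $q_G^*\ge 1$). As literally stated, the equivalence is therefore false in that corner, and your proof cannot (and should not be expected to) rescue it; Niessen's formulation must be read with that case excluded or with a slightly different convention. This has no bearing on the paper, since Theorem~\ref{thm::5} only invokes the result with $a<b$, i.e.\ $g\not\equiv f$, where your argument is complete. I would only ask you to make the parity bookkeeping in the necessity step fully explicit (the three cases for the location of the adjusted vertex), since that is where a reader is most likely to lose the thread.
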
 

In particular, we obtain the following characterization for graphs having all $[a,b]$-factors.

\begin{thm}\label{thm::5}
	Let $G$ be a graph and $a < b$ be two positive integers. Then $G$ has all $[a, b]$-factors if and only if for all disjoint  subsets $S, T \subseteq V(G)$, we have
	\begin{equation}\label{equ::1}
		\delta(S,T)=a|S|-b|T|+\sum_{x \in T}d_{G-S}(x)-q(S,T) \ge -1,
	\end{equation}
	where $q(S,T)$ denotes the number of connected components of $G-(S\cup T)$. 
\end{thm}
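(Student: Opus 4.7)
The plan is to derive Theorem~\ref{thm::5} directly from Niessen's characterization (Theorem~\ref{thm::2}) by specializing the constant functions $g\equiv a$ and $f\equiv b$, and then carefully translating the quantity $q_{G}^{*}(D,S,g,f)$ into the simpler component count $q(S,T)$.

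First, I would set $g(v)\equiv a$ and $f(v)\equiv b$ for every $v\in V(G)$ in Theorem~\ref{thm::2}. Since the hypothesis $a<b$ ensures $f\neq g$, the right-hand side of Niessen's inequality becomes $-1$. Next, under these constant choices, the substitutions $g(D)=a|D|$, $f(S)=b|S|$, and $\sum_{x\in S}d_{G-D}(x)=\sum_{x\in S}d_{G-D}(x)$ are immediate; after renaming $D\mapsto S$ and $S\mapsto T$ to match the notation of the statement, the left-hand side of Niessen's inequality turns into $a|S|-b|T|+\sum_{x\in T}d_{G-S}(x)-q_{G}^{*}(S,T,a,b)$.

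The key step is to show that $q_{G}^{*}(S,T,a,b)=q(S,T)$, i.e.\ that every component of $G-(S\cup T)$ is counted. Recall that a component $C$ contributes to $q_{G}^{*}$ precisely when either some $v\in V(C)$ satisfies $g(v)<f(v)$, or the parity condition $e_{G}(V(C),T)+f(V(C))\equiv 1\pmod{2}$ holds. Because we have chosen $g\equiv a<b\equiv f$, the first alternative $g(v)<f(v)$ is satisfied at \emph{every} vertex $v\in V(G)$; in particular, it holds at each vertex of every component $C$. Hence every component of $G-(S\cup T)$ is automatically counted in $q_{G}^{*}$, regardless of the parity clause, so $q_{G}^{*}(S,T,a,b)=q(S,T)$.

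Putting these pieces together, Theorem~\ref{thm::2} with the above specialization yields exactly $\delta(S,T)=a|S|-b|T|+\sum_{x\in T}d_{G-S}(x)-q(S,T)\ge -1$ for all disjoint $S,T\subseteq V(G)$, which is (\ref{equ::1}). Conversely, given inequality (\ref{equ::1}) for all disjoint $S,T$, the same identification $q_{G}^{*}(S,T,a,b)=q(S,T)$ shows that Niessen's condition is satisfied, so $G$ has all $[a,b]$-factors. The main (and essentially only) obstacle is the observation that when $a<b$, the parity condition becomes superfluous because the strict inequality $g(v)<f(v)$ already forces every component into the count; everything else is a mechanical substitution.
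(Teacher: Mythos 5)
Your proposal is correct and matches the paper's (implicit) derivation: the paper states Theorem~\ref{thm::5} as an immediate specialization of Niessen's Theorem~\ref{thm::2} with $g\equiv a$, $f\equiv b$, and your key observation --- that $a<b$ makes the first disjunct in the definition of $q_G^*$ hold for every component, so $q_G^*(S,T,a,b)=q(S,T)$ and the parity clause is irrelevant --- is exactly the point that justifies this. Nothing further is needed.
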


An alternative approach to $(g,f)$-factor is provided by the concept of fractional $(g,f)$-factor. Let  $h: E(G) \rightarrow [0, 1]$ be a function defined on $E(G)$ satisfying $g(v) \le \sum_{e \in E_G(v)}h(e) \le f(v)$ for all $v \in V(G)$. Setting $F_{h} = \{e: e \in E(G), h(e) > 0\}$. Then the subgraph of $G$ with vertex set $V(G)$ and edge set $F_h$, denoted by $G[F_h]$, is called a \textit{fractional $(g, f)$-factor} of $G$ with indicator function $h$. In particular, a fractional $(f,f)$-factor is called a \textit{fractional $f$-factor}, and if $g\equiv a$ and $f\equiv b$ with  $a\leq b$ being positive integers, then a fractional $(g,f)$-factor is called a \textit{fractional $[a,b]$-factor}. In \cite{A}, Anstee gave a necessary and sufficient condition for the existence of a fractional $(g,f)$-factor in a graph. Also, a new proof of Anstee's result was given by Liu and Zhang \cite{LZ}.

\begin{thm}(Anstee \cite{A}; Liu and Zhang \cite{LZ}\label{thm::3})
	Let $G$ be a graph and $g$, $f$: $V(G) \rightarrow \mathbb{Z}^+$ be two integer functions such that $g(v) \le f(v)$ for all $v \in V(G)$. Then $G$ has a fractional $(g, f)$-factor if and only if for any subset $S \subseteq V(G)$, we have
$$
		f(S)-g(T)+\sum_{v \in T}d_{G-S}(v) \ge 0,
$$
	where $T = \{ v\mid v \in V(G)-S$ and $d_{G-S}(v) < g(v)\}$. 
\end{thm}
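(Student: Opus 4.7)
My plan is to prove the two directions by different methods: \emph{necessity} by a direct weight-counting argument on an assumed fractional $(g,f)$-factor, and \emph{sufficiency} by linear programming duality, where the dual infeasibility certificate is forced into a $\{0,1\}$-valued form that reads off the sets $S$ and $T$ in the statement.

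For necessity, I would let $h:E(G)\to[0,1]$ be a fractional $(g,f)$-factor, fix any $S\subseteq V(G)$, and define $T=\{v\in V(G)\setminus S: d_{G-S}(v)<g(v)\}$. For each $v\in T$, the bound $\sum_{e\in E_G(v)}h(e)\ge g(v)$ together with the fact that edges from $v$ lying entirely in $G-S$ contribute at most $d_{G-S}(v)$ forces the edges from $v$ to $S$ to carry weight at least $g(v)-d_{G-S}(v)$. Summing this over $v\in T$ gives $\sum_{e\in E_G(S,T)}h(e)\ge g(T)-\sum_{v\in T}d_{G-S}(v)$; on the other hand this sum is bounded above by $\sum_{u\in S}\sum_{e\in E_G(u)}h(e)\le f(S)$, which yields the stated inequality.

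For sufficiency, I would view the existence of a fractional $(g,f)$-factor as the feasibility of the linear system $0\le h_e\le 1$, $g(v)\le\sum_{e\ni v}h_e\le f(v)$, and, assuming infeasibility, apply Farkas' lemma to produce nonnegative dual multipliers $\alpha:E(G)\to\mathbb{R}_{\ge 0}$ and $\beta,\gamma:V(G)\to\mathbb{R}_{\ge 0}$ satisfying $\alpha_e+\beta_u+\beta_v\ge\gamma_u+\gamma_v$ for every $e=uv$, together with $\sum_e\alpha_e+\sum_v f(v)\beta_v<\sum_v g(v)\gamma_v$. The crucial step is to reduce the certificate to $\{0,1\}$-values; my plan is to achieve this by passing to the bipartite double cover of $G$, whose vertex-edge incidence matrix is totally unimodular so that an optimal dual can be chosen at a $\{0,1\}$-vertex and then projected back. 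Setting $S=\{v:\beta_v=1\}$ and $T=\{v:\gamma_v=1\}$, one may assume $S\cap T=\emptyset$, since at any $v\in S\cap T$ decreasing both $\beta_v$ and $\gamma_v$ by $1$ preserves the edge constraints and, thanks to $f(v)\ge g(v)$, preserves the strict inequality. A short edge-by-edge check then identifies the minimising choice $\alpha_e=\max\{0,|e\cap T|-|e\cap S|\}$, and summing over $e$ yields $\sum_e\alpha_e=2e_G(T)+e_G(T,V(G)\setminus(S\cup T))=\sum_{v\in T}d_{G-S}(v)$, so the infeasibility inequality rewrites as $f(S)-g(T)+\sum_{v\in T}d_{G-S}(v)<0$.

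To finish, I would observe that for this $S$ the quantity $f(S)-g(T')+\sum_{v\in T'}d_{G-S}(v)=f(S)+\sum_{v\in T'}(d_{G-S}(v)-g(v))$ is minimised over $T'\subseteq V(G)\setminus S$ by taking exactly those $v$ with $d_{G-S}(v)<g(v)$, which is precisely the set $T$ in the theorem statement. Replacing the dual-derived $T$ by this extremal one only decreases the expression, contradicting the hypothesis. The main obstacle is the integrality reduction of the Farkas certificate; should the bipartite double cover argument prove unwieldy, I would fall back on a direct $(s,t)$-flow construction with vertex lower bounds handled by the standard super-source and super-sink transformation, in which the integrality of minimum cuts is automatic and the same sets $S$ and $T$ emerge from the cut structure.
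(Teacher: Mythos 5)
The paper itself offers no proof of this statement: Theorem~\ref{thm::3} is imported from Anstee \cite{A} and Liu--Zhang \cite{LZ}, so your attempt can only be measured against the standard proofs in the literature. Your necessity argument is complete and correct, and is the standard one: since $h\le 1$, the at most $d_{G-S}(v)$ edges at $v\in T$ inside $G-S$ carry total weight at most $d_{G-S}(v)$, so the edges from $v$ into $S$ carry at least $g(v)-d_{G-S}(v)$; summing over $T$ and comparing with $\sum_{u\in S}\sum_{e\in E_G(u)}h(e)\le f(S)$ gives the inequality. Your closing observation that the stated $T$ minimises $\sum_{v\in T'}\bigl(d_{G-S}(v)-g(v)\bigr)$ over all $T'\subseteq V(G)\setminus S$, so that the ``for all disjoint $S,T'$'' form and the stated form are equivalent, is also correct and is exactly the right bridge.

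The gap is in the sufficiency half, at the step you yourself flag as crucial. A Farkas certificate on $G$ cannot be rounded by total unimodularity, because the vertex--edge incidence matrix of a non-bipartite graph is not totally unimodular: already for the triangle with $g\equiv f\equiv 1$ the primal polytope is the single non-integral point $h\equiv 1/2$. Passing to the bipartite double cover $\tilde G$ does restore total unimodularity and does preserve feasibility (average a fractional factor of $\tilde G$ over the involution swapping the two copies), but ``then projected back'' conceals the real work. An integral certificate on $\tilde G$ yields sets split over the two sides, say $S_A\cup S_B$ and $T_A\cup T_B$, and the violated inequality on $\tilde G$ decomposes into two \emph{crossed} brackets, one pairing the image of $S_B$ with the image of $T_A$ and one pairing the image of $S_A$ with the image of $T_B$; at least one bracket is negative, but the two image sets in a bracket need not be disjoint in $V(G)$. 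Your uncrossing step (decreasing $\beta_v$ and $\gamma_v$ together at a common vertex) is performed in the wrong graph and does not address this; removing a common vertex from both sets of a bracket changes its value by terms of both signs, so disjointness genuinely has to be argued. This is repairable, and your fallback --- the doubled-vertex flow network with unit capacities on edge arcs and bounds $g(v)\le\cdot\le f(v)$ on source and sink arcs, followed by Hoffman's circulation condition --- is essentially Anstee's actual proof; but there too the minimum cut must be symmetrised and converted into a single disjoint pair $(S,T)$, and that conversion is precisely the content you have not supplied. As written, the sufficiency direction is an outline missing its decisive combinatorial step.
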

For other simplified sufficient conditions for the existence of a fractional $(g,f)$-factor (or particularly, fractional $[a,b]$-factor) in graphs, see  \cite{BOW,O1,PLZ,ZS,ZZ}. We say that $G$ has \textit{all fractional $(g,f)$-factors} if it has a fractional $p$-factor for every function $p: V(G)\rightarrow \mathbb{Z}^+$ such that $g(v) \le p(v) \le f(v)$ for all $v\in V(G)$.  In \cite{LH},  Lu provided  a characterization for graphs having all fractional $(g,f)$-factors.

\begin{thm}(Lu \cite{LH}\label{thm::4})
	Let $G$ be a graph and $g$, $f$: $V(G) \rightarrow \mathbb{Z}^+$ be two integer functions such that $g(x) \le f(x)$ for all $x \in V(G)$. Then $G$ has all fractional $(g, f)$-factors if and only if for any subset $S \subseteq V(G)$, we have
$$
		g(S)-f(T)+\sum_{x \in T}d_{G-S}(x) \ge 0,
$$
	where $T = \{ v\mid v \in V(G)-S$ and $d_{G-S}(v) < f(v) \}$. 
\end{thm}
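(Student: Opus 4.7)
The plan is to derive both implications from Theorem~\ref{thm::3} (Anstee; Liu--Zhang) by specializing it to the function $p$ in the role of both $g$ and $f$, and in each direction choosing the right auxiliary $p$.

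For necessity, suppose $G$ has all fractional $(g,f)$-factors and fix $S\subseteq V(G)$. I would build $p:V(G)\to\mathbb{Z}^+$ by setting $p(v)=g(v)$ for $v\in S$ and $p(v)=f(v)$ for $v\in V(G)\setminus S$. Since $g\le f$ pointwise, this $p$ satisfies $g\le p\le f$, so by hypothesis $G$ admits a fractional $p$-factor. Applying Theorem~\ref{thm::3} with this $p$ in both roles to the same set $S$ yields $p(S)-p(T^{\ast})+\sum_{v\in T^{\ast}}d_{G-S}(v)\ge 0$, where $T^{\ast}=\{v\in V(G)\setminus S:d_{G-S}(v)<p(v)\}$. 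Because $p$ coincides with $f$ on $V(G)\setminus S$, one reads off immediately that $T^{\ast}=T$, $p(T^{\ast})=f(T)$, and $p(S)=g(S)$, which is exactly the desired inequality.

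For sufficiency, assume the stated inequality holds for every $S\subseteq V(G)$, and let $p$ be an arbitrary function with $g\le p\le f$. To show that $G$ has a fractional $p$-factor, by Theorem~\ref{thm::3} it suffices to verify, for each $S\subseteq V(G)$, that $p(S)-p(T_p)+\sum_{v\in T_p}d_{G-S}(v)\ge 0$, where $T_p=\{v\in V(G)\setminus S:d_{G-S}(v)<p(v)\}$. Because $p\le f$, the inclusion $T_p\subseteq T$ is automatic; moreover, for each $v\in T\setminus T_p$ we have $p(v)\le d_{G-S}(v)<f(v)$, so $d_{G-S}(v)-f(v)\le -1$. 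Splitting the hypothesized inequality along the partition $T=T_p\cup(T\setminus T_p)$ gives
\[
0\le g(S)-f(T)+\sum_{v\in T}d_{G-S}(v)=\Bigl[g(S)-f(T_p)+\sum_{v\in T_p}d_{G-S}(v)\Bigr]+\sum_{v\in T\setminus T_p}\bigl(d_{G-S}(v)-f(v)\bigr),
\]
and since the last sum is nonpositive, the bracketed quantity is itself nonnegative. Finally, $p(S)\ge g(S)$ and $p(T_p)\le f(T_p)$ together upgrade this to $p(S)-p(T_p)+\sum_{v\in T_p}d_{G-S}(v)\ge 0$, which is the Anstee condition for $p$.

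I do not foresee a serious obstacle: the whole argument is bookkeeping with the sets $T$ and $T_p$ once the right auxiliary $p$ has been chosen in the forward direction. The only delicate point is making sure the chain of inequalities in the sufficiency proof composes in the right order, namely that the signs on $T\setminus T_p$ work in our favour and that the substitutions $g(S)\mapsto p(S)$ and $f(T_p)\mapsto p(T_p)$ both weaken the left-hand side rather than strengthen it.
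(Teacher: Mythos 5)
Your argument is correct and complete: the hybrid function $p$ (equal to $g$ on $S$ and to $f$ off $S$) gives necessity, and the decomposition of $T$ into $T_p$ and $T\setminus T_p$ together with the monotonicity $g(S)\le p(S)$, $p(T_p)\le f(T_p)$ gives sufficiency, both via Theorem~\ref{thm::3}. Note, however, that the paper offers no proof of this statement at all --- it is quoted verbatim from Lu's article \cite{LH} --- so there is nothing internal to compare against; your reduction to Anstee's criterion is the natural route and is essentially the one taken in the cited source. The one small point worth making explicit is that in the necessity step the constructed $p$ does map into $\mathbb{Z}^{+}$ and satisfies $g\le p\le f$ only because $g\le f$ pointwise, which you do invoke; with that observed, the bookkeeping with $T$, $T^{\ast}$ and $T_p$ is exactly as you describe.
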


As a corollary of Theorem \ref{thm::4}, Lu \cite{LH} also obtained a characterization for graphs with all fractional $[a,b]$-factors.  

\begin{thm}(Lu\cite{LH})\label{thm::6}
	Let $G$ be a graph and $a < b$ be two positive integers. Then $G$ has all fractional $[a, b]$-factors if and only if for any subset $S \subseteq V(G)$, we have
	\begin{equation}\label{equ::2}
		\theta(S,T)=a|S|-b|T|+\sum_{x \in T}d_{G-S}(x) \ge 0,
	\end{equation}
	where $T = \{ v\mid v \in V(G)-S$ and $d_{G-S}(v) < b \}$. 
\end{thm}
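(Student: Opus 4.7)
The plan is to derive Theorem~\ref{thm::6} as an immediate specialization of Theorem~\ref{thm::4}, taking the constant integer-valued functions $g\equiv a$ and $f\equiv b$ on $V(G)$.

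First, I would verify that the two notions of ``having all factors'' agree under this choice. By definition, $G$ has all fractional $[a,b]$-factors iff it admits a fractional $p$-factor for every $p:V(G)\to\mathbb{Z}^+$ with $a\le p(v)\le b$ for each $v\in V(G)$. When $g\equiv a$ and $f\equiv b$, the constraint $g(v)\le p(v)\le f(v)$ of Theorem~\ref{thm::4} is precisely $a\le p(v)\le b$. Consequently, ``$G$ has all fractional $[a,b]$-factors'' coincides with ``$G$ has all fractional $(g,f)$-factors'' under this specialization, so the criterion of Theorem~\ref{thm::4} applies verbatim.

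Next, I would substitute $g\equiv a$ and $f\equiv b$ into the inequality provided by Theorem~\ref{thm::4}. One immediately gets $g(S)=a|S|$ and $f(T)=b|T|$, so the condition $g(S)-f(T)+\sum_{x\in T}d_{G-S}(x)\ge 0$ becomes
$$a|S|-b|T|+\sum_{x\in T}d_{G-S}(x)\ge 0,$$
which is exactly $\theta(S,T)\ge 0$ as in (\ref{equ::2}). Likewise, the defining condition $d_{G-S}(v)<f(v)$ for membership of $v$ in $T$ simplifies to $d_{G-S}(v)<b$, matching the statement.

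Since the argument is a pure translation from variable functions to constants, I do not anticipate any genuine obstacle. The only point requiring care is checking that the quantifier over $S\subseteq V(G)$ and the definition of $T$ transfer unchanged under the specialization, which is immediate; with these verifications in hand, both directions of the ``if and only if'' follow at once from Theorem~\ref{thm::4}.
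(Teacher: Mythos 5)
Your proposal is correct and matches the paper's approach: the paper presents Theorem~\ref{thm::6} precisely as a corollary of Theorem~\ref{thm::4} obtained by specializing $g\equiv a$ and $f\equiv b$, which is exactly the substitution you carry out. No further detail is given in the paper, and none is needed.
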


%As an application of  Theorem \ref{thm::5}, Lu \cite{LH} obtained a sufficient condition for graphs having all fractional $[a,b]$-factors.

%\begin{thm}(Lu\cite{LH})\label{thm::6}
%	Let $a < b$ be two positive integers. Let $G$ be a graph with %order $n \ge 2(a+b)(a+b-1)/a$ and minimum degree $\delta_{G} \ge %\frac{(a+b-1)^2+4b}{4a}$. If $|N_{G}(u) \cup N_{G}(v)| \ge %\frac{bn}{a+b}$ for any two nonadjacent vertices $u$ and $v$ in %$G$, then $G$ has all fractional $[a, b]$-factors.
%\end{thm}

%Zhou and Sun \cite{ZS} posed a new neighborhood union condition for the existence of all fractional $[a,b]$-factors in graphs.

%\begin{thm}(Zhou and Sun\cite{ZS})\label{thm::7}
%	Let $a, b, r$ be three integers with $1 \le a \le b$ and $r %\ge 2$. Let $G$ be a graph of order $n$ with $n > %\frac{(a+b)(r(a+b)-2)}{a}$. If
%	\begin{equation}
%		\delta_{G} \ge \frac{(r-1)b^2}{a}
%		\nonumber
%	\end{equation}
%	and
%	\begin{equation}
%		|N_{G}(x_{1}) \cup N_{G}(x_{2}) \cup \cdots \cup %N_{G}(x_{r})| \ge \frac{bn}{a+b}
%		\nonumber
%	\end{equation}
%	for any independent subset $\{ x_{1}, x_{2}, \cdots , x_{r}\}$ %in $G$, then $G$ admits all fractional $[a, b]$-factors.
%\end{thm}

It is natural to ask whether there are some spectral conditions for graphs having all $[a, b]$-factors or  all fractional $[a, b]$-factors. 
In this paper, by using Theorem \ref{thm::5} and Theorem \ref{thm::6}, we provide tight spectral radius conditions for a graph to have all $[a,b]$-factors and all fractional $[a, b]$-factors, respectively.  

The \textit{adjacency matrix} of  $G$ is defined as $A(G)=(a_{u,v})_{u,v\in V(G)}$, where $a_{u,v}=1$ if $u$ and $v$ are adjacent in $G$, and $a_{u,v}=0$ otherwise, and the \textit{spectral radius} of $G$, denoted by $\rho(G)$, is the largest eigenvalue of  $A(G)$.  For any two integers $n$ and $b$ with $2 \leq b\leq n-1$, we denote $H_{n,b}:=K_{b-1}\nabla (K_1\cup K_{n-b})$. The main results of this paper are as below.

\begin{thm}\label{thm::7}
	Let $3\le a < b$ be integers, and let $G$ be a graph of order $n \geq 2b^2+4b$. If $\rho (G) \geq \rho (H_{n,b})$, then $G$ has all $[a,b]$-factors unless $G\cong H_{n,b}$.
\end{thm}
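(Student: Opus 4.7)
We argue by contradiction: assume $G$ is a graph of order $n \ge 2b^2+4b$ with $\rho(G) \ge \rho(H_{n,b})$ that does not possess all $[a,b]$-factors, yet $G \not\cong H_{n,b}$. By Theorem~\ref{thm::5}, there exist disjoint subsets $S, T \subseteq V(G)$ satisfying
\[
a|S| - b|T| + \sum_{x\in T} d_{G-S}(x) - q(S,T) \;\le\; -2.
\]
Set $s = |S|$, $t = |T|$, and $q = q(S,T)$; these parameters drive the remainder of the argument.

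\textbf{Reduction to an edge-maximal configuration.} I will show that adjoining any edge within $S$, between $S$ and $V(G)\setminus S$, inside a single component of $G-(S\cup T)$, or between two different components of $G-(S\cup T)$, either preserves $\delta(S,T)$ or makes it strictly more negative (the last operation merges two components, decreasing $q$ by one). Iterating these additions yields a supergraph $\widehat{G}\supseteq G$ with $\rho(\widehat{G})\ge\rho(G)$ and $\delta_{\widehat{G}}(S,T)\le -2$, of canonical form $\widehat{G}[S]\cong K_s$, $S$ completely joined to $V(\widehat{G})\setminus S$, and $\widehat{G}-(S\cup T)\cong K_{n-s-t}$ a single clique $C$. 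The only remaining freedom lies in the edges inside $T$ and between $T$ and $C$, which are constrained by $2e(\widehat{G}[T])+e_{\widehat{G}}(T,C)\le bt-as-1$; maximising the number of such edges forces $\widehat{G}[T]$ to be empty and $e_{\widehat{G}}(T,C)=bt-as-1$. Henceforth it suffices to derive a contradiction for $\widehat{G}$.

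\textbf{Case analysis.} If $s=t=0$, then $q(\emptyset,\emptyset)\ge 2$ renders $\widehat{G}$ disconnected, giving $\rho(\widehat{G})\le n-2<\rho(H_{n,b})$, a contradiction (using $K_{n-1}\subset H_{n,b}$). If $t=0$ and $s\ge 1$, then $q(S,\emptyset)\ge as+2\ge 5$ forces $\widehat{G}-S$ to split into many cliques; the resulting equitable-partition quotient matrix has largest eigenvalue $\approx n-q<\rho(H_{n,b})$ for large $n$. When $t\ge 1$, we combine $\rho(\widehat{G})>n-2$ with the edge-count identity from the reduction to conclude $s,t = O(b)$, so that---thanks to $n\ge 2b^2+4b$---only finitely many pairs $(s,t)$ survive. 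For each surviving $\widehat{G}$, a Kelmans-type transformation concentrates the $bt-as-1$ bipartite edges onto as few vertices of $C$ as possible without decreasing $\rho(\widehat{G})$, reducing the analysis to a graph whose spectral radius is governed by a small quotient matrix associated with a natural equitable partition.

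\textbf{Conclusion and main obstacle.} The final step compares $\rho(\widehat{G})$ with $\rho(H_{n,b})$, the latter arising from the $3\times 3$ quotient matrix corresponding to the partition $\{V(K_{b-1}),V(K_1),V(K_{n-b})\}$. Expanding the characteristic polynomials of the relevant quotient matrices around $\lambda=n-2$ and using $n\ge 2b^2+4b$ to dominate the lower-order terms in $b$, one verifies $\rho(\widehat{G})<\rho(H_{n,b})$ for every configuration except $(s,t)=(0,1)$ with the unique vertex of $T$ having exactly $b-1$ neighbours in $C$---which is precisely $H_{n,b}$. The principal obstacle is this uniform spectral comparison: carefully tracking the $b$-dependent constants in the polynomial expansions, and exploiting the numerical threshold $n\ge 2b^2+4b$, is exactly what is needed to make the inequality strict in every case. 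Once this is achieved, $\widehat{G}\cong H_{n,b}$; combined with $G\subseteq\widehat{G}$ and $\rho(G)\ge\rho(\widehat{G})$, the Perron--Frobenius theorem forces $G=\widehat{G}\cong H_{n,b}$, contradicting the assumption and completing the proof.
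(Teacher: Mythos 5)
Your overall strategy (pass to an edge-maximal supergraph $\widehat{G}$, then compare quotient-matrix eigenvalues with $\rho(H_{n,b})$) is genuinely different from the paper's, which instead uses Hong's bound $\rho(G)\le\sqrt{2e(G)-n+1}$ together with $\rho(G)>n-2$ to get $e(\overline{G})\le n-2$, and then counts non-edges across the components of $G-(S\cup T)$ to contradict $\delta(S,T)\le -2$ directly. Unfortunately, your reduction contains a sign error that breaks it at the first step: adding an edge between two different components of $G-(S\cup T)$ decreases $q(S,T)$ by one, and since $\delta(S,T)=a|S|-b|T|+\sum_{x\in T}d_{G-S}(x)-q(S,T)$, this \emph{increases} $\delta(S,T)$ by one, i.e.\ makes it \emph{less} negative, not ``strictly more negative'' as you claim. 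Consequently you cannot merge the components into a single clique $C$ while preserving $\delta_{\widehat{G}}(S,T)\le -2$, and the canonical form on which your entire case analysis rests is not available. The correct version of your constraint is $2e(\widehat{G}[T])+e_{\widehat{G}}(T,C)\le bt-as+q-2$ with $q$ the \emph{original} component count, which can a priori be as large as $n-s-t$; controlling it requires something like the paper's extremal choice of $(S,T)$ maximizing $|S\cup T|$ (which forces every component to have at least two vertices, hence $q\le (n-s-t)/2$, and more importantly lets one absorb single-vertex components into $S$ or $T$). You have no analogue of this.

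Beyond that, the decisive step of your argument is explicitly deferred: you state that the ``uniform spectral comparison'' $\rho(\widehat{G})<\rho(H_{n,b})$ for every surviving configuration is ``the principal obstacle'' and ``exactly what is needed,'' but you do not carry it out, nor do you justify the claimed bound $s,t=O(b)$ (the paper proves $|T|\le 2b+2$ via Hong's inequality but never needs, and does not obtain, a comparable bound on $|S|$). As written, the proposal identifies a plausible programme but neither the structural reduction nor the spectral estimates that would complete it are correct or supplied.
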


\begin{thm}\label{thm::8}
	Let $1\le a < b$ be integers, and let $G$ be a graph of order $n \geq 3b(b+a+1)/a+7$. If $\rho (G) \geq \rho (H_{n,b})$, then $G$ has all fractional $[a,b]$-factors unless $G\cong H_{n,b}$.
\end{thm}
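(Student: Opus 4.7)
The plan is to argue by contradiction. Suppose $G$ has $\rho(G) \geq \rho(H_{n,b})$, $G \not\cong H_{n,b}$, and $G$ lacks some fractional $[a,b]$-factor. Applying Theorem~\ref{thm::6} yields $S \subseteq V(G)$ with $T = \{v \in V(G) \setminus S : d_{G-S}(v) < b\}$ satisfying $\theta(S,T) \leq -1$. Write $s = |S|$, $t = |T|$, and $V' = V(G) \setminus (S \cup T)$; necessarily $t \geq 1$ (else $\theta = as \geq 0$), and the violating inequality rearranges to
\[
2\, e_G(T) + e_G(T, V') = \sum_{v\in T} d_{G-S}(v) \leq bt - as - 1,
\]
which forces $bt \geq as + 1$.

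Combining this with the trivial bounds $e_G(S) \leq \binom{s}{2}$, $e_G(S, V(G)\setminus S) \leq s(n-s)$, and $e_G(V') \leq \binom{n-s-t}{2}$, I obtain
\[
e(G) \leq \binom{s}{2} + s(n-s) + \binom{n-s-t}{2} + (bt - as - 1) =: \phi(s,t).
\]
Since $e(H_{n,b}) = \binom{n-1}{2} + b - 1$, a direct rearrangement yields
\[
\phi(s,t) - e(H_{n,b}) = s(1-a) + (t-1)(s + t + b - n) - \binom{t-1}{2}.
\]
For $t = 1$ this equals $s(1-a) \leq 0$, with equality iff $s = 0$ or $a = 1$. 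For $t \geq 2$, using $s \leq (bt-1)/a$ and the hypothesis $n \geq 3b(b+a+1)/a + 7$, routine estimation gives the strict bound $\phi(s,t) \leq e(H_{n,b}) - 1$.

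Next, form the auxiliary graph $F$ from $G$ by adding every missing edge inside $S$, between $S$ and $V(G) \setminus S$, and inside $V'$. Then $F \cong K_s \nabla F'$, where $F'$ is a graph on $T \cup V'$ containing $K_{n-s-t}$ on $V'$ and carrying the $T$-incident edges of $G$; since $G \subseteq F$, $\rho(G) \leq \rho(F)$, with strict inequality when $G \subsetneq F$ and $F$ is connected. In the equality regime ($t = 1$ with $s = 0$ or $a = 1$) the saturated edge bound forces $F$ to be isomorphic to $H_{n,b}$ (the resulting degree sequence matches only $H_{n,b}$); combined with $\rho(G) \geq \rho(H_{n,b})$ this gives $G = F = H_{n,b}$, contradicting $G \not\cong H_{n,b}$. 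In the strict regime ($t \geq 2$, or $t = 1$ with $s \geq 1$ and $a \geq 2$), I upper-bound $\rho(F)$ via the equitable $3$-partition $(S,T,V')$, whose quotient matrix has a tractable cubic characteristic polynomial, and lower-bound $\rho(H_{n,b})$ via its natural equitable partition $(V(K_{b-1}), V(K_1), V(K_{n-b}))$. Comparing the two cubics under the hypothesis $n \geq 3b(b+a+1)/a + 7$ yields $\rho(F) < \rho(H_{n,b})$, hence $\rho(G) < \rho(H_{n,b})$, a contradiction.

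The main obstacle is converting the edge-count gap $\phi(s,t) \leq e(H_{n,b}) - 1$ into a strict spectral gap $\rho(F) < \rho(H_{n,b})$ uniformly across admissible pairs $(s,t)$. Although both spectral radii are roots of explicit cubics coming from their equitable partitions, the comparison demands careful bookkeeping for small $(s,t)$ (where the edge gap is only a few units) and relies essentially on the large-$n$ hypothesis to control the tail; verifying the edge inequality is routine but tedious, and identifying $F$ with $H_{n,b}$ in the equality regime rests on a degree-sequence check.
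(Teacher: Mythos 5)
Your setup, the identity $\phi(s,t)-e(H_{n,b})=s(1-a)+(t-1)(s+t+b-n)-\binom{t-1}{2}$, and the treatment of $t=1$ (a vertex of $G$-degree at most $b-1$ forces $G$ to be a spanning subgraph of $H_{n,b}$, whence $\rho(G)<\rho(H_{n,b})$ unless $G\cong H_{n,b}$) are all sound and agree with the paper's Case 1. The gap is in the step that converts the edge deficit into a spectral deficit for $t\ge 2$. The partition $(S,T,V')$ of your auxiliary graph $F$ is \emph{not} equitable: $F$ retains the original $T$-incident edges of $G$, so vertices of $T$ have varying numbers of neighbours inside $T$ and inside $V'$, and vertices of $V'$ have varying numbers of neighbours in $T$. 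For a non-equitable partition the largest eigenvalue of the quotient matrix only \emph{lower}-bounds $\lambda_1(A(F))$ (interlacing/Rayleigh), so the proposed comparison of cubics cannot yield $\rho(F)<\rho(H_{n,b})$. Moreover, the edge gap you actually claim, $\phi(s,t)\le e(H_{n,b})-1$, is too weak to finish by the natural substitute, Hong's bound $\rho(G)\le\sqrt{2e(G)-n+1}$ (Lemma \ref{lem::2}): that route needs $e(G)\le\binom{n-1}{2}=e(H_{n,b})-(b-1)$, not merely $e(H_{n,b})-1$. (A gap of $1$ would suffice only if you additionally invoked the Brualdi--Hoffman/Rowlinson theorem that the $\rho$-maximizer among graphs with $\binom{n-1}{2}+b-2$ edges is the threshold graph $H_{n,b-1}$, which you do not.)

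The good news is that your own estimates already give more than you claimed: for $2\le t\le 2b+2$ the bound $s\le (bt-1)/a$ together with $n\ge 3b(b+1)/a+3b+7$ yields $\phi(s,t)\le e(H_{n,b})-(b+1)<\binom{n-1}{2}$, and for $t\ge 2b+3$ one gets $\phi(s,t)-e(H_{n,b})\le -(t-1)/2\le -(b+1)$ as well; in either case Hong's bound then gives $\rho(G)<n-2<\rho(H_{n,b})$ directly, with no auxiliary graph $F$ needed. If you repair the argument this way, your proof becomes a genuinely more uniform alternative to the paper's: the paper uses Hong's bound only for $|T|\ge 2b+3$, and for $2\le|T|\le 2b+2$ it instead locates two low-degree vertices of $T$ to embed $G$ in $K_m\nabla(K_2\cup K_{n-m-2})$ and bounds that host's spectral radius by a genuine equitable-partition computation (Lemma \ref{lem::3}). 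As written, however, the strict regime of your proof does not go through.
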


	\section{Preliminaries}

        Let $M$ be an $n\times n$ matrix with real entries, and let $\Pi=\{X_{1},X_{2}, \ldots,X_{k}\}$ be a partition of $[n]=\{1,2,\ldots,n\}$. Then the matrix $M$ can be partitioned as
        $$
	M=\left(\begin{array}{ccccccc}
		M_{1,1}&M_{1,2}&\cdots&M_{1,k}\\
            M_{2,1}&M_{2,2}&\cdots&M_{2,k}\\
            \vdots&\vdots&\ddots&\vdots\\
            M_{k,1}&M_{k,2}&\cdots&M_{k,k}\\
	\end{array}\right).
	$$
 The \textit{quotient matrix} of $M$ with respect to $\Pi$ is  the $k\times k$ matrix $B_{\Pi}=(b_{i,j})^{k}_{i,j=1}$ with
        $$
            b_{i,j}=\frac{1}{|X_{i}|}\mathbf j^{T}_{|X_{i}|}M_{i,j}\mathbf j_{|X_{j}|}
        $$
for all $i,j \in \{ 1,2,\ldots,k \}$, where $\mathbf j_{s}$ denotes the all ones vector in $\mathbb{R}^{s}$. If each block $M_{i,j}$ of $M$ has constant row sum $b_{i,j}$, then $\Pi$ is called an \textit{equitable partition} of $M$, and correspondingly, $B_\Pi$ is called an  \textit{equitable quotient matrix} of $M$. As usual, if $M$ has only real eigenvalues, then we denote them by $\lambda_1(M)\geq \lambda_2(M)\geq \cdots \geq \lambda_n(M)$. 

        \begin{lem}(Brouwer and Haemers \cite[p. 30]{BH}; Godsil and Royle \cite[pp. 196--198]{GR})\label{lem::1}
            Let $M$ be a real symmetric matrix, and let $B$ be an equitable quotient matrix of $M$. Then the eigenvalues of $B$ are also eigenvalues of $M$. Furthermore, if $M$ is nonnegative and irreducible, then
            $$
                \lambda_1(M)=\lambda_1(B).
            $$
	\end{lem}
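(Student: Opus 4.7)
My plan centers on the $n \times k$ characteristic matrix $C$ of the partition $\Pi = \{X_1,\ldots,X_k\}$, defined by $C_{v,i} = 1$ if $v \in X_i$ and $C_{v,i}=0$ otherwise. The equitability hypothesis --- that the $v$-th row of the block $M_{i,j}$ sums to $b_{i,j}$ for every $v \in X_i$ --- is equivalent to the matrix identity $MC = CB$, and both halves of the lemma will flow from this single relation. I would begin by writing this identity out explicitly to confirm it.

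For the first assertion, I would take any eigenpair $(\mu,\mathbf{y})$ of $B$ and set $\mathbf{z} = C\mathbf{y}$. Then $M\mathbf{z} = MC\mathbf{y} = CB\mathbf{y} = \mu\mathbf{z}$, and $\mathbf{z} \neq 0$ because the columns of $C$ are characteristic vectors of disjoint nonempty classes, hence linearly independent. So $\mu$ is an eigenvalue of $M$. To justify even writing $\lambda_1(B)$, I would note in passing that $C^T C = \mathrm{diag}(|X_1|,\ldots,|X_k|) =: D$ and $C^T M C = C^T C B = DB$ is symmetric (since $M$ is), so $B$ is similar to the symmetric matrix $D^{1/2} B D^{-1/2}$ and therefore has real spectrum.

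For the second assertion, the inequality $\lambda_1(B) \leq \lambda_1(M)$ is immediate from the first part. For the reverse direction, the idea is to produce a strictly positive eigenvector of $M$ with eigenvalue $\lambda_1(B)$ and invoke uniqueness of the Perron vector. I would first check that $B$ inherits nonnegativity (each $b_{i,j}$ is an average of nonnegative entries of $M$) and irreducibility (any walk in the directed graph of $M$ from a vertex of $X_i$ to a vertex of $X_j$ projects to a walk from $i$ to $j$ in the directed graph of $B$). The Perron-Frobenius theorem applied to $B$ then yields a strictly positive eigenvector $\mathbf{y}$ with eigenvalue $\lambda_1(B)$, so $C\mathbf{y}$ is a strictly positive vector satisfying $M(C\mathbf{y}) = \lambda_1(B)(C\mathbf{y})$. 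Since $M$ is nonnegative and irreducible, Perron-Frobenius tells us its only strictly positive eigenvector corresponds to $\lambda_1(M)$, forcing $\lambda_1(B) = \lambda_1(M)$.

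The step I expect to need the most care is the inheritance of irreducibility by $B$, since without it one would only obtain a nonnegative Perron vector for $B$, which might vanish on some coordinates and therefore fail to lift to a \emph{strictly} positive eigenvector of $M$. Once this is in place, the uniqueness clause of Perron-Frobenius closes the argument without further computation.
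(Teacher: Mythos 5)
The paper does not prove this lemma at all; it is quoted verbatim from the cited textbooks of Brouwer--Haemers and Godsil--Royle. Your argument is the standard proof found in those references --- the identity $MC=CB$ for the characteristic matrix $C$, lifting eigenvectors of $B$ via $\mathbf{y}\mapsto C\mathbf{y}$, and the Perron--Frobenius uniqueness of the positive eigenvector for the second assertion --- and it is correct and complete, including the two points that actually require care (realness of the spectrum of $B$ via similarity to $D^{1/2}BD^{-1/2}$, and the inheritance of irreducibility by $B$ so that its Perron vector is strictly positive).
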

	
	\begin{lem}(Hong \cite{H})\label{lem::2}
		Let $G$ be a connected graph with $n$ vertices and $m$ edges. Then
		$$\rho(G)\le \sqrt{2m-n+1}.$$
	\end{lem}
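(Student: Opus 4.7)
I plan to argue by contradiction, using the combinatorial characterization in Theorem~\ref{thm::6} together with Hong's bound and an edge-addition monotonicity step. Suppose, for contradiction, that $G\not\cong H_{n,b}$ satisfies $\rho(G)\ge\rho(H_{n,b})$ yet fails to have all fractional $[a,b]$-factors. By Theorem~\ref{thm::6} there exist disjoint $S,T\subseteq V(G)$ with $T=\{v\in V(G)\setminus S:d_{G-S}(v)<b\}$ and $\theta(S,T)\le -1$. Since $\theta(S,\emptyset)=a|S|\ge 0$ forces $t:=|T|\ge 1$, the proof splits according to whether $t=1$ or $t\ge 2$.

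\textbf{Case $t=1$.} Writing $T=\{v\}$ and $s=|S|$, the bound $\theta(S,T)\le -1$ reads $d_{G-S}(v)\le b-as-1$, hence $d_G(v)\le d_{G-S}(v)+s\le b-s(a-1)-1\le b-1$. Therefore $\theta(\emptyset,\{v\})=d_G(v)-b\le -1$ as well, so I may take $S=\emptyset$. Set $k=d_G(v)\le b-1$ and form the supergraph $G'$ of $G$ by adjoining every missing edge of $G-v$; since no such edge is incident with $v$, we have $d_{G'}(v)=k$ and $G'-v=K_{n-1}$, so $G'$ is $K_{n-1}\cup K_1$ if $k=0$ and $H_{n,k+1}$ if $k\ge 1$. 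In every sub-case $G'$ embeds as a subgraph of the connected graph $H_{n,b}$, with equality forcing $k=b-1$ and $G=G'$. Monotonicity of the spectral radius under edge addition (strict for connected targets) therefore yields
\[
\rho(G)\le\rho(G')\le\rho(H_{n,b}),
\]
and the hypothesis $\rho(G)\ge\rho(H_{n,b})$ forces equality throughout, giving $G=G'=H_{n,b}$, which contradicts $G\not\cong H_{n,b}$.

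\textbf{Case $t\ge 2$.} Let $W=V(G)\setminus(S\cup T)$. The identity $\sum_{v\in T}d_{G-S}(v)=2e_G(T)+e_G(T,W)$, combined with $\theta(S,T)\le -1$ and $e_G(T)\ge 0$, yields $e_G(T)+e_G(T,W)\le bt-as-1$. Together with $e_G(V\setminus T)\le\binom{n-t}{2}$ and $e_G(S,T)\le st$, this produces the edge bound
\[
e(G)\le\binom{n-t}{2}+st+bt-as-1,
\]
and Lemma~\ref{lem::2} (Hong) then gives
\[
\rho(G)^2\le 2e(G)-n+1\le n^2-2n-1+t(2s+2b+t+1-2n)-2as.
\]
On the other side, Lemma~\ref{lem::1} applied to the equitable partition $\{V(K_{b-1}),\{v\},V(K_{n-b})\}$ of $V(H_{n,b})$ shows that $\rho(H_{n,b})$ is the largest root of $p(x)=x^3-(n-3)x^2+(3-b-n)x+(b-1)(n-b-1)$; a direct evaluation gives $p(n-2)=-(b-1)^2<0$, so $\rho(H_{n,b})>n-2$. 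It therefore suffices to verify that, for every admissible pair $(s,t)$ with $t\ge 2$, $as\le bt-1$, and $s+t\le n$, the right-hand side of the displayed Hong-type bound is strictly less than $(n-2)^2$; this is an elementary inequality, and it is precisely here that the hypothesis $n\ge 3b(b+a+1)/a+7$ is consumed.

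\textbf{Main obstacle.} The conceptual ingredients are already available in Theorems~\ref{thm::6}, Lemmas~\ref{lem::1} and \ref{lem::2}; the real work is the book-keeping in Case $t\ge 2$, where the quadratic-in-$s$ upper bound on $\rho(G)^2$ must be compared with $(n-2)^2$ uniformly in $(s,t,a,b)$. The two regimes $t\le a$ (decreasing in $s$, so $s=0$ is worst) and $t>a$ (increasing in $s$, requiring substitution of the binding constraint $s\le\min\{(bt-1)/a,n-t\}$) need separate treatment. The tightest sub-case appears to be $t$ of order $an/(a+b)$ paired with $s=n-t$, and it is the slack demanded there that dictates the specific constant $3b(b+a+1)/a+7$ in the order hypothesis; carrying this out is routine once the Hong-type bound above is set up, but is the place where all the arithmetic lives.
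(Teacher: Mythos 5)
Your proposal does not prove the statement it is supposed to prove. The statement in question is Lemma~\ref{lem::2}, Hong's spectral bound $\rho(G)\le\sqrt{2m-n+1}$ for a connected graph with $n$ vertices and $m$ edges. What you have written is instead an outline of the proof of Theorem~\ref{thm::8} (the spectral condition for all fractional $[a,b]$-factors). Nothing in your argument addresses the inequality $\rho(G)\le\sqrt{2m-n+1}$; on the contrary, you explicitly \emph{invoke} Lemma~\ref{lem::2} in the middle of Case $t\ge 2$ (``Lemma~\ref{lem::2} (Hong) then gives\dots''), so even if one tried to read your text as a proof of that lemma it would be circular. In the paper this lemma is a quoted result from Hong's 1988 article and is used as a black box; it is not derived from the factor-theoretic machinery at all.

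If you want to actually prove Lemma~\ref{lem::2}, the standard short argument is purely spectral: since $G$ is connected, every vertex has degree at least $1$, and for any vertex $v$ one has
\[
\sum_{u\sim v}d_G(u)\;=\;2m-d_G(v)-\sum_{\substack{u\ne v\\ u\not\sim v}}d_G(u)\;\le\;2m-d_G(v)-\bigl(n-1-d_G(v)\bigr)\;=\;2m-n+1 .
\]
Because $\rho(G)^2=\rho\bigl(A(G)^2\bigr)$ is bounded above by the maximum row sum of $A(G)^2$, which equals $\max_v\sum_{u\sim v}d_G(u)$, the bound follows. Your Theorem~\ref{thm::8} sketch may have independent merit as an outline of that theorem's proof, but as a proof of the stated lemma it is a complete mismatch.
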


    \begin{lem}\label{lem::3}
Let $a$, $b$ and $n$ be positive integers with  $b\geq a$ and $n \geq 3b(b+1)/a+3b+7$. Then
$$
\rho\left(K_{\left\lceil\frac{2b^2+2b}{a}\right\rceil+2b-4} \nabla \left(K_{2} \cup K_{n-\left\lceil\frac{2b^2+2b}{a}\right\rceil-2b+2}\right)\right)< n-2
$$
and
$$
\rho(K_{4b} \nabla (K_{2} \cup K_{n-4b-2}))<n-2.
$$
    \end{lem}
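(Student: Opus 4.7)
\emph{Proof plan.} Both graphs appearing in the statement share the common form
$$
G_s := K_s \nabla (K_2 \cup K_{n-s-2}),
$$
with $s = \lceil (2b^2 + 2b)/a \rceil + 2b - 4$ in the first inequality and $s = 4b$ in the second. Using $a \leq b$ and the hypothesis on $n$, one easily checks $s \geq 2$ and $n \geq s + 4$, so the three vertex classes form an equitable partition $\Pi$ of $V(G_s)$ with quotient matrix
$$
B_s = \begin{pmatrix} s-1 & 2 & n-s-2 \\ s & 1 & 0 \\ s & 0 & n-s-3 \end{pmatrix}.
$$
Since $G_s$ is connected, Lemma \ref{lem::1} gives $\rho(G_s) = \lambda_1(B_s)$, and the task reduces to showing $\lambda_1(B_s) < n - 2$.

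The next step is to convert this spectral bound into an algebraic condition in $s$ and $n$. Writing the Perron eigenvalue equations for $B_s$ with positive eigenvector $(x_1, x_2, x_3)$, solving the last two coordinates for $x_2$ and $x_3$, and substituting into the first, one obtains that $\lambda = \lambda_1(B_s)$ is a zero of
$$
F(\lambda) := (\lambda - s + 1) - \frac{2s}{\lambda - 1} - \frac{s(n - s - 2)}{\lambda - n + s + 3}.
$$
This manipulation is legitimate because the Rayleigh-quotient estimate $\lambda_1(B_s) \geq 2 e(G_s)/n = n - 5 + (4s + 8)/n$ together with $s \geq 2$ forces $\lambda_1(B_s) > \max\{1, n - s - 3\}$, so both denominators are positive. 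Moreover $F'(\lambda) > 0$ on this half-line, so $F$ is strictly increasing, and since $n - 2 > \max\{1, n - s - 3\}$, I conclude $\lambda_1(B_s) < n - 2$ if and only if $F(n - 2) > 0$. Clearing the denominators $(n-3)$ and $(s+1)$ at $\lambda = n - 2$ simplifies this to
$$
(n - 1)(n - 3) > 2 s (s + 1). \quad (\ast)
$$

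It remains to verify $(\ast)$ for both values of $s$. When $s = 4b$, the hypothesis with $a \leq b$ gives $n \geq 6b + 10$, so $(n - 1)(n - 3) \geq (6b + 9)(6b + 7) = 36 b^2 + 96 b + 63$, which exceeds $2 s (s + 1) = 32 b^2 + 8 b$. When $s \leq u + 2b - 3$ with $u := 2 b (b + 1)/a$, the bound $n - 2 \geq \tfrac{3}{2} u + 3 b + 5$ yields, after straightforward expansion,
$$
(n - 1)(n - 3) - 2 s (s + 1) \geq \tfrac{1}{4} u^2 + (b + 25) u + b^2 + 50 b + 12 > 0.
$$
The main obstacle in this plan is establishing the \emph{equivalence} $\rho(G_s) < n - 2 \iff (\ast)$ rather than merely one direction: one needs to exclude a middle eigenvalue of $B_s$ landing above $n-2$, and the strict monotonicity of $F$ on the correct half-line is what delivers this cleanly. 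As a backup, one could combine the determinant computation $\det((n-2) I - B_s) = 2 s (s + 1) - (n - 1)(n - 3)$ with the interlacing bound $\lambda_2(G_s) \leq \lambda_1(G_s - v) \leq n - 2$ for any $v \in V(K_s)$ (a vertex of degree $n-1$).
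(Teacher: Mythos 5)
Your proof is correct and takes essentially the same route as the paper: both arguments reduce to the $3\times 3$ equitable quotient matrix and boil down to showing that its characteristic polynomial is positive at $n-2$, which is exactly your condition $(n-1)(n-3)>2s(s+1)$, and your verifications of this inequality for $s=4b$ and $s=\lceil(2b^2+2b)/a\rceil+2b-4$ check out. The only methodological difference is how $\lambda_1(B_s)$ is localized below $n-2$: you use strict monotonicity of the Perron rational function $F$ on the half-line where both denominators are positive, whereas the paper also evaluates the characteristic polynomial at $n-3$ and invokes the trace; both are valid. One small slip worth fixing: in your backup remark the determinant should be $\det((n-2)I-B_s)=(n-1)(n-3)-2s(s+1)$, not its negative (as the $s=1$, $n=5$ case already shows).
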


    \begin{proof}
Suppose $G_1=K_{\lceil(2b^2+2b)/a\rceil+2b-4} \nabla (K_{2} \cup K_{n-\lceil(2b^2+2b)/a\rceil-2b+2})$. Let $V_{1}=V(K_2)$, $V_2=V(K_{\lceil(2b^2+2b)/a\rceil+2b-4})$ and $V_3=V(K_{n-\lceil(2b^2+2b)/a\rceil-2b+2})$. It is easy to check that the partition  $\Pi: V(G_1)=V_1\cup V_2\cup V_3$ is an equitable partition of $A(G_1)$, and the corresponding  quotient matrix is 	
$$
	B_\Pi=
	\begin{pmatrix}
		1&\lceil\frac{2b^2+2b}{a}\rceil+2b-4&0\\
		2&\lceil\frac{2b^2+2b}{a}\rceil+2b-5&n-\lceil\frac{2b^2+2b}{a}\rceil-2b+2\\
		0&\lceil\frac{2b^2+2b}{a}\rceil+2b-4&n-\lceil\frac{2b^2+2b}{a}\rceil-2b+1\\
	\end{pmatrix}.
	$$
 Let $f(x)$ denote the characteristic polynomial of $B_\Pi$. As $n \geq 3b(b+1)/a+3b+7$, by a simple calculation, we obtain 
	$$
	\begin{aligned}
		f(n-2)=  n^2-4n-2\left(\left\lceil\frac{2b^2+2b}{a}\right\rceil\right)^2-(8b-14)\left\lceil\frac{2b^2+2b}{a}\right\rceil+28b-21> 0
	\end{aligned}
	$$
	and 
	$$f(n-3)=\det((n-3)I_n-B_\Pi)=-2\left(\left\lceil\frac{2b^2+2b}{a}\right\rceil+2b-4\right)^2<0.$$
We claim that $\lambda_1(B_\Pi)< n-2$. If not, then $f(n-2)>0$ and $f(n-3)<0$ imply that  $\lambda_2(B_\Pi)>n-2>\lambda_3(B_\Pi)>n-3$, and hence $\lambda_1(B_\Pi)+\lambda_2(B_\Pi)+\lambda_3(B_\Pi)> 3n-7$. On the other hand, $\lambda_1(B_\Pi)+\lambda_2(B_\Pi)+\lambda_3(B_\Pi)=\mathrm{trace}(B_\Pi)=n-3$, which is a contradiction. Therefore, by Lemma \ref{lem::1}, 
	$$
	\rho(G_1)=\lambda_1(B_\Pi)< n-2.
	$$
Similarly,  we also can prove that $\rho(K_{4b} \nabla (K_{2} \cup K_{n-4b-2}))< n-2$ because $n \geq 3b(b+1)/a+3b+7\geq \sqrt{32b^2-8b+1}+3$. This proves the lemma.
    \end{proof}

   \begin{lem}\label{lem::4}
   	Let $a$, $b$ and $n$ be positive integers.
   	\begin{enumerate}[(i)]
   		\item If $1\le a < b\leq n-1$, then  $H_{n,b}$  does not have all $[a, b]$-factors.
   		\item If $1\le a < b\leq n-2$, then  $H_{n,b}$  does not have all fractional $[a, b]$-factors.
   	\end{enumerate}
    \end{lem}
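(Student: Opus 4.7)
The plan is to deduce both nonexistence statements from the characterizations in Theorem~\ref{thm::5} and Theorem~\ref{thm::6} by exhibiting a single explicit pair $(S,T)$ (or just $S$) in $V(H_{n,b})$ that violates the corresponding inequality. No subtle construction is needed; the whole argument should reduce to a short computation once the relevant degrees in $H_{n,b}$ are recorded.

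I would first set up notation: write $V(H_{n,b}) = A \cup \{v\} \cup B$, where $A$ is the vertex set of the $K_{b-1}$ side of the join, $v$ is the vertex of the $K_1$ summand on the other side, and $B$ is the vertex set of the $K_{n-b}$ summand. The degrees in $H_{n,b}$ are immediate: each vertex of $A$ has degree $n-1$, the vertex $v$ has degree $b-1$, and each vertex of $B$ has degree $n-2$. The crucial feature is that $v$ has degree strictly less than $b$, and this is what will drive the violation. I also note for use in (i) that $A \cup B$ induces a $K_{n-1}$, so the graph $H_{n,b} - \{v\}$ is connected.

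For part~(i), I would test the pair $S = \emptyset$, $T = \{v\}$. By the previous observation $q(S,T) = 1$, and $d_{G-S}(v) = d_{H_{n,b}}(v) = b-1$, so a one-line substitution into the defect yields
\[
\delta(\emptyset,\{v\}) = a\cdot 0 - b\cdot 1 + (b-1) - 1 = -2 < -1,
\]
which by Theorem~\ref{thm::5} precludes $H_{n,b}$ from having all $[a,b]$-factors. For part~(ii), I would take $S = \emptyset$; then Theorem~\ref{thm::6} forces $T = \{x \in V(H_{n,b}) : d_{H_{n,b}}(x) < b\}$, and the slightly stronger hypothesis $b \le n-2$ ensures that every vertex of $A$ and $B$ has degree at least $b$, so $T = \{v\}$. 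Direct substitution gives
\[
\theta(\emptyset,\{v\}) = a\cdot 0 - b\cdot 1 + (b-1) = -1 < 0,
\]
contradicting the condition in Theorem~\ref{thm::6}. No real obstacle is anticipated; the only point to watch is the assumption $b \le n-2$ in (ii), which is exactly what prevents a vertex of $B$ (of degree $n-2$) from also lying in $T$ and thereby altering the computation.
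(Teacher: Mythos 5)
Your proposal is correct and follows exactly the paper's argument: both parts test $S=\emptyset$ and $T=\{v\}$ (the $K_1$ vertex of degree $b-1$) against Theorems~\ref{thm::5} and~\ref{thm::6}, obtaining $\delta=-2$ and $\theta=-1$ respectively, and you correctly identify that $b\le n-2$ is what forces $T=\{v\}$ in part (ii). No differences worth noting.
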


\begin{proof}
(i) Recall that $H_{n,b}=K_{b-1}\nabla (K_1\cup K_{n-b})$. Take $S=\emptyset$ and $T=V(K_1)$. Then we have
$$\delta(S,T)=a|S|-b|T|+\sum_{x\in T}d_{G-S}(x)-q(S,T)=-b+b-1-1=-2< -1,$$
contrary to \eqref{equ::1}. Therefore, by Theorem \ref{thm::5}, the graph $H_{n,b}$ cannot have all $[a, b]$-factors.

(ii) Take $S=\emptyset$ and $T=V(K_1)$. Since $n\geq b+2$, we see that $T$ contains all vertices of degree at most $b-1$ in $H_{n,b}-S=H_{n,b}$. Then we have
	$$\theta(S,T)=a|S|-b|T|+\sum_{x\in T}d_{G-S}(x)=-b+b-1=-1< 0,$$
contrary to \eqref{equ::2}. Therefore, by Theorem \ref{thm::6}, the graph  $H_{n,b}$ cannot have all fractional $[a, b]$-factors. 
\end{proof}

\section{Proof of the main results}

In this section, we give the proof of Theorems \ref{thm::7} and \ref{thm::8}.

{\flushleft \it Proof of Theorem \ref{thm::7}.}  Clearly, the graph $H_{n,b}=K_{b-1}\nabla (K_1\cup K_{n-b})$ is connected, and does not have all $[a, b]$-factors according to Lemma \ref{lem::4}. By assumption,  $\rho(G) \geq \rho(H_{n,b}) > \rho(K_{n-1})=n-2$ because $K_{n-1}$ is a proper subgraph of $H_{n,b}$. First notice  that  $G$ is connected. If not, let  $G_1, G_2,\ldots,G_r$ ($r\ge 2$) denote the connected components of $G$. As each $G_i$ is a subgraph of $K_{n-1}$, we have $\rho(G)=\max\{\rho(G_1),\rho(G_2),\ldots,\rho(G_r)\}\le \rho(K_{n-1})=n-2$, contrary to $\rho(G)>n-2$.  

Now suppose to the contrary that  $G$ does not have all $[a, b]$-factors and $G\ncong H_{n,b}$.  By Theorem \ref{thm::5}, there exist two disjoint vertex subsets $S, T\subseteq V(G)$ such that
\begin{equation}\label{equ::3}
	\delta(S,T)=a|S|-b|T|+\sum_{x \in T}d_{G-S}(x)-q(S,T) \le -2,
\end{equation}
where $q(S,T)$ denotes the number of connected components of $G-(S\cup T)$. We choose $S,T\subseteq V(G)$ satisfying \eqref{equ::3} such that $|S\cup T|$ is maximum. Let $s=|S|$, $t=|T|$ and $q=q(S,T)$. Suppose that  $C_1, C_2,\ldots, C_q$ are the connected components of $G-(S\cup T)$. We have the following two claims.

\begin{claim}\label{claim::1}
$e(\overline{G})\leq n-2$.
\end{claim}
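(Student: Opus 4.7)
The plan is to combine the assumed spectral lower bound $\rho(G)\geq \rho(H_{n,b})$ with Hong's inequality (Lemma~\ref{lem::2}) to squeeze out a lower bound on $e(G)$, and then translate it into the desired upper bound on $e(\overline{G})$.

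First I would note that $H_{n,b}=K_{b-1}\nabla (K_1\cup K_{n-b})$ properly contains $K_{n-1}$ as a spanning subgraph, so $\rho(H_{n,b})>\rho(K_{n-1})=n-2$; this is already recorded in the paragraph preceding the claim. Hence $\rho(G)\geq \rho(H_{n,b})>n-2$. Since $G$ has already been shown to be connected, Lemma~\ref{lem::2} applies and gives
$$
\rho(G)\leq \sqrt{2e(G)-n+1}.
$$
Squaring and comparing with $\rho(G)>n-2$ yields $(n-2)^2<2e(G)-n+1$, that is,
$$
e(G)>\frac{n^2-3n+3}{2}.
$$

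Finally, I would compute $e(\overline{G})=\binom{n}{2}-e(G)$ to obtain
$$
e(\overline{G})<\frac{n(n-1)}{2}-\frac{n^2-3n+3}{2}=\frac{2n-3}{2}=n-\tfrac{3}{2}.
$$
Since $e(\overline{G})$ is a nonnegative integer, this forces $e(\overline{G})\leq n-2$.

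There is no substantive obstacle here: the argument is a direct application of Hong's bound once the strict inequality $\rho(G)>n-2$ is in hand. The only prerequisites to double-check are that $G$ is connected (so that Lemma~\ref{lem::2} is applicable) and that $\rho(H_{n,b})>n-2$, both of which are already in place before the statement of the claim.
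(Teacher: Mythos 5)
Your proposal is correct and is essentially the paper's own argument: both apply Hong's bound (Lemma~\ref{lem::2}) to the inequality $\rho(G)>n-2$ to get a lower bound on $e(G)$ and then pass to the complement. The only cosmetic difference is that you perform the integer rounding on $e(\overline{G})$ while the paper rounds at the stage $e(G)\geq\binom{n-1}{2}+1$; these are equivalent.
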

\begin{proof}
Since $\rho(G)>n-2$, by Lemma \ref{lem::2}, we have $e(G)\geq \binom{n-1}{2}+1$, and hence $e(\overline{G})\leq n-2$. This proves Claim \ref{claim::1}.
\end{proof}

\begin{claim}\label{claim::2}
If $q\geq 1$, then $|V(C_i)|\geq 2$ for any $i\in \{1,2,\ldots,q\}$.
\end{claim}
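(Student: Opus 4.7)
The plan is to argue by contradiction, exploiting the maximality of $|S\cup T|$ in the choice of $(S,T)$. Suppose some component $C_i$ is a singleton $\{v\}$. I will exhibit a new pair, either $(S,T\cup\{v\})$ or $(S\cup\{v\},T)$, that still satisfies $\delta\le -2$ but with $|S'\cup T'|=|S\cup T|+1$, contradicting maximality.

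Set $d=d_{G-S}(v)$. Since $\{v\}$ is a component of $G-(S\cup T)$, every neighbor of $v$ in $G-S$ must lie in $T$, so $d=e_G(\{v\},T)$. For the move $v\mapsto T$, the component count drops by $1$, $|T|$ grows by $1$, and the sum $\sum_{x\in T\cup\{v\}}d_{G-S}(x)$ grows by exactly $d$, which yields
$$\delta(S,T\cup\{v\})-\delta(S,T)=-b+d+1.$$
For the alternative move $v\mapsto S$, each $x\in T$ with $vx\in E(G)$ loses the neighbor $v$ in $G-(S\cup\{v\})$, so the degree sum drops by $d$; combined with $|S|$ growing by $1$ and the component count dropping by $1$, we obtain
$$\delta(S\cup\{v\},T)-\delta(S,T)=a-d+1.$$

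To finish, I would split on the value of $d$. If $d\le b-1$, then the first move preserves $\delta\le -2$; if $d\ge b$, then since $a<b$ are integers we have $d\ge b\ge a+1$, and the second move preserves $\delta\le -2$. Either move enlarges $|S\cup T|$ by one, contradicting maximality. The only point that requires care is the degree-sum bookkeeping when shifting $v$ into $S$, for which one must use that all neighbors of $v$ in $G-S$ actually lie in $T$, which is exactly the hypothesis that $\{v\}$ is a component of $G-(S\cup T)$; together with the trivial observation that the complementary ranges $d\le b-1$ and $d\ge a+1$ exhaust all values of $d$ because $a<b$, this is the whole argument, and no deeper obstacle is anticipated.
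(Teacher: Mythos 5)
Your proof is correct and follows essentially the same route as the paper: move the singleton component into $T$ when $d\le b-1$ and into $S$ when $d\ge b$ (using $d\ge b\ge a+1$), in each case showing $\delta$ does not increase and contradicting the maximality of $|S\cup T|$. The bookkeeping identities you derive match the paper's computation exactly.
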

\renewcommand\proofname{\it Proof}
\begin{proof}
Let $V_i=V(C_i)$. By contradiction, we assume that  $|V_i|=1$ for some $i\in \{1,2,\ldots,q\}$.  If $e_G(V_i, T)\leq b-1$, we let  $T'=T\cup V_i$. Then $q(S,T')=q-1$ and 
	$$
	\begin{aligned}
		\delta(S,T')&=a|S|-b|T'|+\sum_{x \in T'}d_{G-S}(x)-q(S,T')\\
		&= a|S|-b(|T|+1)+\left(\sum_{x \in T}d_{G-S}(x)+e_G(V_i, T)\right)-(q-1) \\
		&=\left(a|S|-b|T|+\sum_{x \in T}d_{G-S}(x)-q\right)-(b-1-e_G(V_i, T))  \\
		&\leq\delta(S,T)\\
		&\leq -2.
	\end{aligned}
	$$
Thus $S$ and $T'$ also satisfy \eqref{equ::3}, which contradicts the choice of $S$ and $T$. If $e_G(V_i, T)\geq b$, we let $S'=S\cup V_i$. Then $q_G(S',T)=q-1$ and
$$
\begin{aligned}
	\delta(S',T)&=a|S'|-b|T|+\sum_{x \in T}d_{G-S'}(x)-q''\\
	&= a(|S|+1)-b|T|+\left(\sum_{x \in T}d_{G-S}(x)-e_G(V_i, T)\right)-(q-1) \\
	&=\left(a|S|-b|T|+\sum_{x \in T}d_{G-S}(x)-q\right)+(a+1-e_G(V_i, T))  \\
	&\leq \delta(S,T)\\
	&\leq -2,
\end{aligned}
$$
contradicting the choice of $S$ and $T$.  This completes the proof of Claim \ref{claim::2}.
\end{proof}
For completing the proof, we consider the following three situations according to the value of $|T|$.

{\flushleft {\it Case 1.} $|T|=0$.}

If $|S|=0$, then $q \geq 2$  by (\ref{equ::3}), and hence $G=G-(S\cup T)$ is disconnected, which is impossible. Thus we can suppose that $|S|\geq 1$. By evaluating the number of edges among $C_1,\ldots,C_q$ in $\overline{G}$, we obtain  
$$
\begin{aligned}
	e(\overline{G})&\geq \frac{1}{2}\sum_{i=1}^{q}|V(C_i)|(n-|S|-|V(C_i)|)\\
	&\geq \sum_{i=1}^{q}(n-|S|-|V(C_i)|)~~\mbox{(as $|V(C_i)|\geq 2$ by Claim \ref{claim::2})}\\
	&=(q-1)(n-|S|).
\end{aligned}
$$
Combining this with $e(\overline{G})\leq n-2<n$ (by Claim \ref{claim::1}) yields that $n>(q-1)(n-|S|)$. Clearly, $q\leq (n-|S|)/2$  by Claim \ref{claim::2}. If $q\geq 3$, then $n<2|S|$ and $\delta(S,T)=a|S|-q\geq a|S|-(n-|S|)/2>a|S|-|S|/2>0$, contrary to \eqref{equ::3}. If $q\leq 2$, then $\delta(S,T)=a|S|-q\geq a|S|-2\geq -1$, again contrary to \eqref{equ::3}.

{\flushleft {\it Case 2.} $|T|=1$.}

In this situation, we suppose  $T=\{x_{0}\}$, and consider the following two cases.

{\flushleft {\it Subcase 2.1.} $|S|=0$.}

According to (\ref{equ::3}), we have $\delta(S,T)=-b+d_{G}(x_0)-q \le -2$. If $q\leq 1$, then $d_G(x_0)\leq b-1$, and hence $G$ is a spanning subgraph of $H_{n,b}$. As $G\ncong H_{n,b}$, we have $\rho(G)< \rho(H_{n,b})$, contrary to our assumption. If $q\geq 2$,  by Claim \ref{claim::2}, we see that  $G$ is a spanning subgraph of $K_1\nabla (K_{r}\cup K_{n-1-r})$ for some integer $r$ with $2\leq r\leq n-3$.  It is not difficult to verify that $\rho(K_1\nabla (K_{r}\cup K_{n-1-r}))< n-2$ whenever $2\leq r\leq n-3$. Thus, $\rho(G)<n-2$, a contradiction.

{\flushleft {\it Subcase 2.2.} $|S|\geq 1$.}

According to (\ref{equ::3}), we have $\delta(S,T)=a|S|-b+d_{G-S}(x_0)-q \le -2$. If $q\leq 1$, then  $d_{G-S}(x_{0}) \le b-a|S|-1$, which implies that  $d_G(x_{0}) \le b-a|S|-1+|S|=b+(1-a)|S|-1 \le b-1$. Thus $G$ is a spanning subgraph of $H_{n,b}$, which is impossible. Now suppose that $q\geq 2$. Since $e(\overline{G})\leq n-2$ by Claim \ref{claim::1}, there are at most $n-2-\frac{1}{2}\sum_{i=1}^{q}|V(C_i)|(n-|S|-1-|V(C_i)|)$ edges not in $E_{G}(T,V(G)\setminus(S\cup T))$. Therefore,  
$$
\begin{aligned}
	d_{G-S}(x_0)&\geq n-1-|S|-\left (n-2-\frac{1}{2}\sum_{i=1}^{q}|V(C_i)|(n-|S|-1-|V(C_i)|)\right)\\
	&=1-|S|+\frac{1}{2}\sum_{i=1}^{q}|V(C_i)|(n-|S|-1-|V(C_i)|)\\
	&\geq1-|S|+\sum_{i=1}^{q}(n-|S|-1-|V(C_i)|)~~\mbox{(as $|V(C_i)|\geq 2$)}\\
	&= q(n-|S|-1)-n+2.
\end{aligned}
$$
Recall that $n \geq 2b^2+4b$ and $a\geq 3$. We have
$$
\begin{aligned}
	\delta(S,T)&=a|S|-b+d_{G-S}(x_0)-q\\
	&\geq a|S|-b+q(n-|S|-1)-n+2-q  \\
	&\geq a|S|-b+2(n-|S|-2)-n+2  \\
	&=(a-2)|S|+n-b-2  \\
	&\geq (2b^2+4b)-b-2\\
	&=2b^2+3b-2\\
	&\geq 0,
\end{aligned}
$$
which is contrary to (\ref{equ::3}).

{\flushleft {\it Case 3.} $|T|\geq 2$.}

{\flushleft {\it Subcase 3.1.} $|S|=0$.}

In this situation, from (\ref{equ::3}) we obtain $\delta(S,T)=-b|T|+\sum_{x \in T}d_{G}(x)-q \le -2$. If $q\leq 1$, then  $\sum_{x \in T}d_{G}(x) \le b|T|-1$, and hence there exists some vertex $x_1\in T$ such that $d_G(x_1)\leq b-1$. Therefore, $G$ is a spanning subgraph of $H_{n,b}$, which is impossible. Now suppose that $q\geq 2$.  As in Subcase 2.2, there are at most $n-2-\frac{1}{2}\sum_{i=1}^{q}|V(C_i)|(n-|T|-|V(C_i)|)$ edges not in $E_{G}(T,V(G)\setminus T)\cup E_{G}(T)$, and therefore,  
$$
\begin{aligned}
	\sum_{x \in T}d_{G}(x)&\geq (n-1)|T|- 2\left(n-2-\frac{1}{2}\sum_{i=1}^{q}|V(C_i)|(n-|T|-|V(C_i)|)\right)\\
	 	 &\geq (n-1)|T|-2n+4 +\sum_{i=1}^{q}2(n-|T|-|V(C_i)|)~~\mbox{(as $|V(C_i)|\geq 2$)}\\
		 &= (n-1)|T|-2n+4 +(2q-2)(n-|T|)\\
	&\geq (n-1)|T|-2n+4+2(n-|T|)\\
	&=(n-3)|T|+4.
\end{aligned}
$$
Note that $q\leq (n-|T|)/2\leq n-|T|$ (by Claim \ref{claim::2}) and $n \geq 2b^2+4b$. We have
$$
\begin{aligned}
	\delta(S,T)&=-b|T|+\sum_{x \in T}d_{G}(x)-q\\
	&\geq -b|T|+(n-3)|T|+4-(n-|T|)  \\
	&=(n-b-2)|T|-n+4  \\
	&\geq 2(n-b-2)-n+4  \\
	&=n-2b\\
	&\geq 2b^2+2b\\
	&>0,
\end{aligned}
$$
contrary to (\ref{equ::3}).

{\flushleft {\it Subcase 3.2.} $|S|\geq 1$.}

Note that $q\leq (n-|S|-|T|)/2$ by Claim \ref{claim::2}. Let $T'=V(G)\setminus(S\cup T)$. Then
$$
\begin{aligned}
e_G(T)+e_G(T,T')&\leq \sum_{x \in T}d_{G-S}(x)\\
&\leq b|T|-a|S|+q-2~~\mbox{(by \eqref{equ::3})}\\
&\leq b|T|-a|S|+\frac{n-|S|-|T|}{2}-2,
\end{aligned}$$ 
and therefore, 
$$
	\begin{aligned}
		e(G)&=e_G(S)+e_G(S,T)+e_G(S,T')+e_G(T)+e_G(T,T')+e_G(T')\\
		&\le \frac{|S|(|S|-1)}{2}+|S|\cdot|T|+|S|(n-|S|-|T|)+b|T|-a|S|+\frac{n-|S|-|T|}{2}-2\\
		&~~~~+\frac{(n-|S|-|T|)(n-|S|-|T|-1)}{2}\\
		&=\frac{(n-2)^2-((2|T|-4)n+(2a-2|T|+1)|S|-|T|^2-2b|T|+8)}{2}.
	\end{aligned}
$$
We claim that $|T|\leq 2b+2$. In fact, if $|T|\geq 2b+3$, then from the above inequality and Lemma \ref{lem::2} we can deduce  that 
$$
	\begin{aligned}
		\rho(G)&\le \sqrt{2e(G)-n+1}\\
		&\le  \sqrt{(n-2)^{2}-((2|T|-3)n+(2a-2|T|+1)|S|-|T|^2-2b|T|+7)}\\
		&\le  \sqrt{(n-2)^{2}-((2|T|-3)(|S|+|T|)+(2a-2|T|+1)|S|-|T|^2-2b|T|+7)}\\
		&=  \sqrt{(n-2)^{2}-(|T|^2-(2b+3)|T|+(2a-2)|S|+7)}\\
		&\le \sqrt{(n-2)^{2}-(|T|^2-(2b+3)|T|+7)}~~\mbox{(as $a\geq 3$)}\\
		&<n-2\\
		&<\rho(H_{n,b}),
	\end{aligned}
$$
contrary to our assumption. Hence, $|T| \le 2b+2$.

If $q=0$, then $V(G)=S\cup T$. As $e(\overline{G})\leq n-2$, we have 
$$
\begin{aligned}
	\sum_{x \in T}d_{G-S}(x)&\geq (n-1-|S|)|T|- 2(n-2)=(n-1-|S|)|T|-2n+4.
\end{aligned}
$$
Note that $b>a\geq 3$, $|S|=n-|T|$ and $n\geq 2b^2+4b$. Then
$$
\begin{aligned}
	\delta(S,T)&=a|S|-b|T|+\sum_{x \in T}d_{G-S}(x)\\
	&\geq a|S|-b|T|+(n-1-|S|)|T|-2n+4 \\
	&= (a-2)n+|T|^2-(a+b+1)|T|+4  \\
	&\geq n+|T|^2-2b|T|+4  \\
	&\geq (2b^2+4b)-b^2+4\\
	&=b^2+4b+4 \\
	&>0,
\end{aligned}
$$
contrary to (\ref{equ::3}).

If $q=1$, we have $\sum_{x \in T}d_{G-S}(x) \le b|T|-a|S|-1$. If $2\le |T|\le a$, then  $\sum_{x \in T}d_{G}(x) \le b|T|+(|T|-a)|S|-1 \le b|T|-1$, and hence there exists some vertex $x_1\in T$ such that $d_G(x_1)\leq b-1$. This implies that $G$ is a spanning subgraph of $H_{n,b}$, which is impossible. Now suppose that  $|T|>a$.  As above, we have 
$$
\begin{aligned}
	\sum_{x \in T}d_{G-S}(x)&\geq (n-1-|S|)|T|- 2(n-2)=(n-1-|S|)|T|-2n+4,
\end{aligned}
$$
and therefore,
$$
\begin{aligned}
	\delta(S,T)&=a|S|-b|T|+\sum_{x \in T}d_{G-S}(x)-q\\
	&\geq a|S|-b|T|+(n-1-|S|)|T|-2n+4-1  \\
	&\geq a|S|-b|T|+a(n-1-|S|)-2n+3   \\
	&= (a-2)n-b|T|-a+3\\
	&\geq n-b(2b+2)-b+3~~\mbox{(as $|T|\leq 2b+2$ and $b>a\geq 3$)}\\
	&\geq (2b^2+4b)-b(2b+2)-b+3  \\
	&=b+3\\
	&> 0,
\end{aligned}
$$
again contrary to (\ref{equ::3}).

If $q\geq 2$, there are at most $n-2-\frac{1}{2}\sum_{i=1}^{q}|V(C_i)|(n-|S|-|T|-|V(C_i)|)$ edges not in $E_G(T,V(G)\setminus(S\cup T))\cup E_G(T)$, and therefore,   
$$
\begin{aligned}
	\sum_{x \in T}d_{G-S}(x)&\geq (n-1-|S|)|T|- 2\left(n-2-\frac{1}{2}\sum_{i=1}^{q}|V(C_i)|(n-|S|-|T|-|V(C_i)|)\right)\\
	&\geq (n-1-|S|)|T|-2n+4+2\sum_{i=1}^{q}(n-|S|-|T|-|V(C_i)|)\\
	&= (n-1-|S|)|T|-2n+4+(2q-2)(n-|S|-|T|).
\end{aligned}
$$
Then we obtain
$$
\begin{aligned}
	\delta(S,T)&=a|S|-b|T|+\sum_{x \in T}d_{G-S}(x)-q\\
	&\geq a|S|-b|T|+(n-1-|S|)|T|-2n+4+(2q-2)(n-|S|-|T|)-q  \\
	&\geq a|S|-b|T|+(n-(|T|-1)-|S|)|T|-2n+4+q(n-|S|-|T|-1)  \\
	&= a|S|-(b-2)|T|-2n+4+(q+|T|)(n-|S|-|T|-1)   \\
	&\geq a|S|-(b-2)|T|-2n+4+3(n-|S|-|T|-1)   \\
	&\geq (a-3)|S|+n-(b+1)|T|+1 ~~\mbox{(as $|T|\leq 2b+2$ and $a\geq 3$)}\\
	&\geq n-(b+1)(2b+2)+1\\
	&\geq (2b^2+4b)-(b+1)(2b+2)+1\\
	&=-1,
\end{aligned}
$$
which is contrary to (\ref{equ::3}).

Therefore, we deduce a contradiction in all situations, and the result follows.  \qed

{\flushleft \it Proof of Theorem \ref{thm::8}.}  Since $n \geq 3b(b+1)/a+3b+7>b+2$, by Lemma \ref{lem::4}, the graph $H_{n,b}$ cannot have all fractional $[a, b]$-factors. As in the proof of Theorem \ref{thm::7}, we see that $\rho(G) \geq \rho(H_{n,b}) > n-2$ and $G$ is connected.  Suppose to the contrary that $G$ does not have all fractional $[a, b]$-factors and $G\ncong H_{n,b}$.  By Theorem \ref{thm::6}, there exists some subset $S\subseteq V(G)$ such that
\begin{equation}\label{equ::4}
	\theta(S,T)=a|S|-b|T|+\sum_{x \in T}d_{G-S}(x) \le -1,
\end{equation}
where $T=\{x:x\in V(G)\backslash S, d_{G-S}(x)< b\}$. We claim that $T\neq \emptyset$, since otherwise we have $\theta(S,T)=a|S|-b|T|+\sum_{x \in T}d_{G-S}(x)=a|S|\ge 0$, contrary to \eqref{equ::4}. Thus it remains to consider the following two situations.

{\flushleft {\it Case 1.} $|T|=1$.}

In this situation, suppose $T=\{x_{0}\}$.  By \eqref{equ::4}, we obtain $d_{G-S}(x_{0}) \le b-a|S|-1$, and so $d_G(x_{0}) \le b-a|S|+|S|-1=b+(1-a)|S|-1 \le b-1$. This implies that $G$ is a spanning subgraph of $H_{n,b}$, which is impossible.

{\flushleft {\it Case 2.} $|T|\geq 2$.}

In this situation,  we claim that $|T|\leq 2b+2$. By contradiction,  suppose that $|T|\geq 2b+3$. Let $T'=V(G)\setminus(S\cup T)$. Again by \eqref{equ::4}, we have
$$e_G(T)+e_G(T,T')\leq \sum_{x \in T}d_{G-S}(x) \le b|T|-a|S|-1,$$ 
and therefore,
$$
	\begin{aligned}
		e_G(G)&=e_G(S)+e_G(S,T)+e_G(S,T')+e_G(T)+e_G(T,T')+e_G(T')\\
		&\le \frac{|S|(|S|-1)}{2}+|S||T|+|S|(n-|S|-|T|)+(b|T|-a|S|-1)\\
		&~~~~+\frac{(n-|S|-|T|)(n-|S|-|T|-1)}{2}\\
		&=\frac{(n-2)^{2}-((2|T|-3)n-|T|^{2}-(2|S|+2b+1)|T|+2a|S|+6)}{2}.
	\end{aligned}
$$
Then from Lemma \ref{lem::2} we deduce that
$$
	\begin{aligned}
		\rho(G)&\le \sqrt{2e(G)-n+1}\\
		&\le \sqrt{(n-2)^{2}-((2|T|-2)n-|T|^2-(2|S|+2b+1)|T|+2a|S|+5)}\\
		&\le \sqrt{(n-2)^{2}-((2|T|-2)(|S|+|T|)-|T|^2-(2|S|+2b+1)|T|+2a|S|+5)}\\
		&=\sqrt{(n-2)^{2}-(|T|^2-(2b+3)|T|+(2a-2)|S|+5)}\\
		&< n-2~~\mbox{(as $|T|\geq 2b+3$ and $a\geq 1$)}\\
		&< \rho(H_{n,b}),
	\end{aligned}
$$
contrary to our assumption. Hence, $|T| \le 2b+2$. 

{\flushleft {\it Subcase 2.1.} $|S|>|T|$.}

In this situation, if $a|S|\ge b|T|$, then $\theta(S,T)=a|S|-b|T|+\sum_{x \in T}d_{G-S}(x)\geq 0$, contrary to \eqref{equ::4}. Hence, $a|S|<b|T|$. Since $|T|\le 2b+2$, we have $|S|<b|T|/a\le (2b^2+2b)/a$, and so $|S|\leq \lceil(2b^2+2b)/a\rceil-1$. Also, we assert that there exists some vertex $x_1\in T$ such that $d_{G-S}(x_{1})\leq b-2$, since otherwise we can deduce from \eqref{equ::4} that $a|S|-|T| \le -1$, which is impossible because $|S|>|T|$ and $a\ge 1$. As $|T|\geq 2$, we take  $x_2\in T$ with $x_1\neq x_2$. Clearly, $d_{G-S}(x_2)\leq b-1$ by the definition of $T$. Then 
$$
	\begin{aligned}
	|(N_G(x_1)\setminus\{x_2\})\cup (N_G(x_2)\setminus\{x_1\})|&\leq |S|+|(N_{G-S}(x_1)\setminus\{x_2\})\cup (N_{G-S}(x_2)\setminus\{x_1\})|\\
	&<\left(\left\lceil\frac{2b^2+2b}{a}\right\rceil-1\right)+(b-1)+(b-2)\\
	&=\left\lceil\frac{2b^2+2b}{a}\right\rceil+2b-4,
	\end{aligned}
	$$
which implies that $G$ is a spanning subgraph of $G_1:=K_{\lceil(2b^2+2b)/a\rceil+2b-4} \nabla (K_{2} \cup K_{n-\lceil(2b^2+2b)/a\rceil-2b+2})$.  Therefore, by Lemma \ref{lem::3}, we obtain $\rho(G)\leq \rho(G_1)< n-2 < \rho(H_{n,b})$, contrary to our assumption.

{\flushleft {\it Subcase 2.2.} $|S|\leq |T|$.}

Let  $x_3, x_4\in T$ with $x_3\neq x_4$. Note that  $|S|\leq |T|\leq 2b+2$. Then we have
$$
	\begin{aligned}
	|(N_G(x_3)\setminus\{x_4\})\cup (N_G(x_4)\setminus\{x_3\})|&\leq |S|+|(N_{G-S}(x_3)\setminus\{x_4\})\cup (N_{G-S}(x_4)\setminus\{x_3\})|\\
	&\leq (2b+2)+2(b-1)\\
	&=4b,
	\end{aligned}$$
and hence $G$ is a spanning subgraph of $G_2:=K_{4b} \nabla (K_{2} \cup K_{n-4b-2})$. Combining this with Lemma \ref{lem::3}, we obtain
$\rho(G)\leq \rho(G_2)< n-2 < \rho(H_{n,b})$, which is also impossible.

This completes the proof.  \qed

\section*{Acknowledgement}

X. Huang is supported by the National Natural Science Foundation of China (Grant No. 11901540).

\end{document}